\documentclass[11pt,twoside,fullpage]{amsart}
\title{A ratio of integration between
  quotients \mbox{in geometric invariant theory}}
\author{Zachary Maddock}
\thanks{This work was supported by the NSF through a Graduate Research
  Fellowship.} 
\date{September 7, 2013}
\usepackage{latexsym}
\usepackage{amssymb}
\usepackage{amsfonts}
\usepackage{amstext}
\usepackage[initials]{amsrefs}
\usepackage{amsmath}
\usepackage{times}
\usepackage{graphicx}
\usepackage{enumerate}
\usepackage{url}
\usepackage[hypertexnames=false]{hyperref}

\DeclareFontFamily{OMS}{rsfs}{\skewchar\font'60}
\DeclareFontShape{OMS}{rsfs}{m}{n}{<-5>rsfs5 <5-7>rsfs7 <7->rsfs10 }{}
\DeclareSymbolFont{rsfs}{OMS}{rsfs}{m}{n}
\DeclareSymbolFontAlphabet{\scr}{rsfs}

 \renewcommand{\theequation}{\arabic{section}.\arabic{subsection}.\arabic{equation}}

\renewcommand{\thesubsection}{\arabic{section}.\arabic{subsection}\setcounter{equation}{0}}



\newcommand{\Z}{\mathbb{Z}}

\newcommand{\C}{\mathbb{C}}
\newcommand{\Q}{\mathbb{Q}}
\newcommand{\G}{\mathbb{G}}
\newcommand{\N}{\mathbb{N}}
\newcommand{\A}{\mathbb{A}}
\newcommand{\F}{\mathbb{F}}
\renewcommand{\P}{\mathbb{P}}

\newcommand{\inv}{^{-1}}
\newcommand{\inject}{\hookrightarrow}

\newcommand{\surject}{\twoheadrightarrow}

\newcommand{\OO}{\mbox{$\mathcal{O}$}}

\newcommand{\sE}{\scr{E}}

\newcommand{\sL}{\scr{L}}
\newcommand{\sM}{\scr{M}}

\DeclareMathOperator{\stab}{stab}
\DeclareMathOperator{\red}{red}

\DeclareMathOperator{\Spec}{Spec}

\DeclareMathOperator{\id}{id}
\DeclareMathOperator{\Char}{char}

\newcommand{\dd}{/\mspace{-6.0mu}/}
\newcommand{\Cdot}{\raisebox{-0.1ex}{\scalebox{1.5}{$\cdot$}}}

\newcounter{repeatcounter}

\newtheorem{repeatamp}{Amplification}

\newtheorem{theorem}[equation]{Theorem}
\newtheorem{amplification}[equation]{Amplification}
\newtheorem{lemma}[equation]{Lemma}
\newtheorem{proposition}[equation]{Proposition}

\newtheorem{corollary}[equation]{Corollary}
\newtheorem{remark}[equation]{Remark}

\newtheorem{question}[equation]{Question}

\theoremstyle{definition}
\newtheorem{definition}[equation]{Definition}

\oddsidemargin 0.25in
\evensidemargin 0.25in
\textwidth 6in
\textheight 8.5in

\input xy 
\xyoption{all}

\newcommand{\ctop}{c_{\romantop}}
\DeclareMathOperator{\romantop}{top}

\newcommand{\rootctop}{\Delta}
\newcommand{\arbclass}{c}

\newcommand{\chargp}[1]{ \Lambda^\ast({#1})}
\newcommand{\cochargp}[1]{ \Lambda_\ast({#1})}
\newcommand{\gitstack}[2] { [{#1}^{ss}_{#2}/{#2}]}

\newcommand{\semistable}[2]{ {#1}^{ss}_{#2}}
\newcommand{\strictlysemistable}[2]{ {#1}^{sss}_{#2}}
\newcommand{\stable}[2]{ {#1}^{ s}_{#2}}
\newcommand{\unstable}[2]{ {#1}^{un}_{#2}}
\newcommand{\comment}[1]{}

\newcommand{\alphanew}{\sigma}
\newcommand{\betanew}{\tau}

\DeclareMathOperator{\rank}{rank}
\DeclareMathOperator{\Sym}{Sym}

\begin{document}
\maketitle

\begin{abstract}
Let $T$ be a maximal torus of a connected reductive group $G$ that acts
  linearly on a projective variety $X$ so that all semi-stable points
  are stable.
  This paper compares the integration on the geometric invariant
  theory quotient $X \dd G$ of Chow classes $\sigma$ 
  to the integration 
  on the geometric invariant theory quotient $X \dd T$ of certain lifts
  of $\sigma$  
  twisted by  $\ctop(\mathfrak g / \mathfrak t)$,
  the top Chern class of the $T$-equivariant vector bundle
  induced by the quotient of the adjoint representation on the
  Lie algebra of $G$ by that of $T$. We provide a purely algebraic
  proof that the ratio between any two such integrals is an
  invariant of  the group $G$ and that
  it equals the order of the Weyl group whenever
  the root system of $G$ decomposes into irreducible
  root systems of type $\mathbf {A}_n$, for various $n \in \N$.
  As a corollary, we are able to remove this restriction on root
  systems by applying a related result of Martin from symplectic geometry. 
\end{abstract}

\section*{Introduction}

\setcounter{equation}{0}

\renewcommand{\theequation}{0.\arabic{equation}}
\renewcommand{\thesubsection}{0.\arabic{subsection}\setcounter{equation}{0}} 

\subsection*{A brief history}
The cohomology of quotients arising from
geometric invariant theory (GIT)  
has been the object of extensive study.
In 1984, Kirwan  \cite{kir1} integrated
the previous works of Hesselink,
 Kempf and Ness \cites{hes1, hes2, kem1, kem-nes1} to
explore the
structure of GIT quotients from both the algebraic and symplectic
perspectives, ultimately finding formulas to compute
 Hodge numbers.  
Five years later, Ellingsrud and Str\o mme \cite{ell-str1}  began to
study 
the relationship
between the Chow rings of the two GIT quotients
 $\P^n_{\bar k} \dd G$ and $\P^n_{\bar k}\dd T$, for
a reductive group $G$ over an algebraically closed field $\bar k$ with
maximal torus $T \subseteq G$ acting on $\P^n_{\bar k}$ so
that all semi-stable points have trivial stabilizers;  
their main result was a presentation of the Chow ring
$A^\ast(\P^n \dd G)_{\Q}$ in terms of explicit generators and
relations.
Brion \cite{bri2} then expanded
this relationship to arbitrary linear actions of connected reductive
groups $G$ on smooth, projective varieties $X$ over the complex
numbers, proving that the $G$-equivariant cohomology of
the locus of $G$-semi-stable points is
isomorphic to the subgroup of 
Weyl anti-invariant classes of the $T$-equivariant cohomology 
group of the larger locus
of $T$-semi-stable points:
\begin{equation*}
\phi:  H^\ast_G(\semistable X G;\Q) \stackrel {\cong}{\to} H^\ast_T (\semistable X T;\Q) ^a.
\end{equation*}
Later 
 Brion and Joshua \cite{bri-jos1} extended these results further to the case of
singular $X$, but with equivariant \emph{intersection} cohomology used as a suitable
replacement for the standard theory.  

Brion's construction of the isomorphism $\phi$ is as follows
(see~\cite{bri2} for full details).
As
$\semistable X G$ is a $G$-variety, the inclusion $T \subseteq G$
induces a homomorphism
$\pi^\ast: H_G^\ast(\semistable X G;\Q) \to H_T^\ast(\semistable X
G;\Q)$. Because $T$ is a maximal torus, $\pi^\ast$ 
yields an 
isomorphism onto the submodule of elements invariant under the action
of the Weyl group $W$:
$$\pi: H_G^\ast(\semistable X G;\Q) \stackrel{\cong}{\to}
H_T^\ast(\semistable X G;\Q)^W.$$
Moreover, there is a
$W$-equivariant isomorphism 
\begin{equation*}
H_T^\ast(\semistable X G; \Q) \cong  S \otimes_{S^W}
H_G^\ast(\semistable X G;\Q),
\end{equation*}
where $S := H_T^\ast(\Spec \C;\Q)$ is the
$T$-equivariant cohomology of the 
point $\Spec \C$, and under this identification $\pi^\ast$ becomes $1 \otimes
\id$. 
The $W$-anti-invariant elements $S^a \subseteq S$ form a free
module of rank~$1$
over the subring $S^W$ of Weyl-invariant elements, and a generator is
given by
$$\rootctop :=
\ctop(\mathfrak g/ \mathfrak b),$$
the top equivariant Chern class of the 
adjoint representation on $\mathfrak g / \mathfrak b$, where
$\mathfrak g$ is the Lie algebra of $G$ and $\mathfrak b$ is the Lie
algebra of a Borel subgroup containing $T$.
Therefore,  $\rootctop \smile \pi^\ast(-)$ gives an
isomorphism from $H_G^\ast(\semistable X G; \Q)$ onto the submodule of
$W$-anti-invariant elements of $H_T^\ast(\semistable X G; \Q)$,
$$\rootctop \smile \pi^\ast(-): H_G^\ast(\semistable X G; \Q)
\stackrel {\cong}\to
H_T^\ast(\semistable X G;\Q)^a.$$
 The open inclusion $i: \semistable X G \inject \semistable X
T$ induces a 
 homomorphism  $i^\ast: H_T^\ast(\semistable X T; \Q) \to
H_T^\ast(\semistable X G; \Q)$, and Brion's key observation is that
$i^\ast$  is an isomorphism on the $W$-anti-invariant
submodules:
$$i^\ast:  H_T^\ast(\semistable X T; \Q)^a \stackrel \cong \to
H_T^\ast(\semistable X G; \Q)^a.$$
The composition 
$\phi:= (i^\ast)\inv \circ ( \rootctop \smile \pi^\ast)$ yields
the desired isomorphism.  Explicitly,  if $\tilde \alphanew \in H_T^\ast(\semistable X
T; \Q)^W$ denotes some $W$-invariant lift of the class 
$\alphanew \in H_G^\ast(\semistable X G; \Q)$, that is if $i^\ast \tilde
\alphanew = \pi^\ast \alphanew$, then $\phi$ can be described
as 
$$\phi: \alphanew \mapsto \rootctop \smile \tilde \alphanew.$$

\subsection*{The main goal}
This paper addresses the question of how
the isomorphism $\phi$ interacts with the integration pairings on 
the GIT quotients $X\dd
G$ and $X \dd T$.
When $\semistable X G = \stable X G$, there is a natural identification
between the equivariant cohomology groups of the semi-stable locus and
the ordinary cohomology groups 
of the GIT quotient,
$$H_G^\ast(\semistable X G;\Q) \cong H^\ast(X\dd G;\Q),$$
(and similarly with $T$ in place of $G$).
For any $\alphanew_1, \alphanew_2 \in H^\ast(X\dd G; \Q)$,
one can then, in such a case, compare the integrals 
$$\int_{X\dd G} \alphanew_1 \smile \alphanew_2~~ \stackrel ? \leftrightarrow
~~\int_{X\dd T} \phi(\alphanew_1) \smile \phi(\alphanew_2).$$
Because $\phi(\alphanew_1) 
\smile \phi(\alphanew_2) = (\rootctop \smile \rootctop)\smile
({\tilde\alphanew_1 \smile \tilde \alphanew_2})$ and $i^\ast(\tilde \alphanew_1 \smile
\tilde \alphanew_2) =\pi^\ast(\alphanew_1 \smile
  \alphanew_2)$, we may 
simplify the expression by defining $\alphanew := \alphanew_1
\smile \alphanew_2$.  Moreover, 
we consider the class $\ctop(\mathfrak g/ \mathfrak
t)$, where $\mathfrak t \subseteq \mathfrak g$ is the inclusion of
the Lie algebra of $T$ in the adjoint representation on the Lie
algebra of $G$, 
instead of the class $\rootctop \smile \rootctop$, which
just differs from the former by the sign $(-1)^{\dim \mathfrak g /\mathfrak
b}$.   
After these substitutions, the question becomes the comparison of the
integrals $\int_{X\dd G} \alphanew$ and $\int_{X \dd T} \ctop(\mathfrak g / \mathfrak t) \smile
\tilde \alphanew$ for $\alphanew \in H^{\ast}(X\dd G;\Q)$. 

Within an unpublished manuscript, 
Martin \cite{mar1} provided an answer to the 
symplecto-geometric analogue of this question.
There he
proved the following formula
for Hamiltonian actions of connected compact Lie groups $G$ on
symplectic manifolds $X$ for which the moment map is proper and has
$0$ as a regular value:
\begin{equation}
 \int_{X\dd G}\alphanew = \frac 1 {|W|} \int_{X\dd T}  \ctop(\mathfrak g / \mathfrak t)
\smile \tilde \alphanew, \label{martins-integration-formula}
\end{equation}
with $X\dd G$ and $X\dd T$ here denoting the symplectic reductions.  
Technically \eqref{martins-integration-formula} only applies when $G$
and $T$ act freely on $\mu_G\inv(0)$ and $\mu_T\inv(0)$ respectively, but
Martin proves a version of the formula that
applies when the stabilizer groups of these actions are finite; this
version involves a multiplicative factor equal to the ratio of the
orders of the generic stabilizers (cf.~\cite[Thm.~B$'$]{mar1}).   

Martin's formula 
may be deduced
from the Jeffrey-Kirwan-Witten non-abelian localization theorem
 \cites{JK,wit}, although the proof Martin gave is much simpler than the
proof of the general theorem \cite[Thm. 8.1]{JK}.
In addition to Martin's work \cite{mar1}, there has been a large body
of literature devoted to understanding non-abelian localization;
alternative approaches to the theorem may be found in the works of
Guillemin and Kalkman \cite{GK}, Paradan \cite{par}, and Vergne
 \cite{ver}.
We note that the methods used in these works are 
analytic, and hence bound to characteristic $0$,
while the
methods used in the works of Brion, Ellingsrud-Str\o mme, and
Brion-Joshua referenced above are algebraic.

\subsection*{New results}
This article generalizes Martin's result
to the algebraic setting of varieties $X$  
over an arbitrary field $k$.  Let $G$ be a reductive group over
$k$ with 
a maximal torus $T \subseteq G$, and 
let $X$ be a projective 
(and possibly singular) $G$-variety with a $G$-linearized
ample line bundle $\sL$ for which $\stable X T =
\semistable X T \neq \emptyset$.
For any Chow $0$-cycle $\alphanew \in A_0(X\dd
\G)_{\Q}$ with non-zero degree, 
we are led to define
the \emph{GIT integration ratio},
$$r_{G,T}^{X,\alphanew} := \frac{\int_{X\dd T} \ctop(\mathfrak g / \mathfrak t) \frown \tilde \alphanew}
{\int_{X\dd G} \alphanew} \in \Q,$$
where the Chow class $\tilde
\alphanew \in A_\ast(X\dd T)_{\Q}$ denotes an arbitrary lift of the class
$\alphanew$ ~(see Defn.~\ref{definition-of-lift}).

Understanding the 
 properties of the GIT integration ratio
will guide us to the correct generalization of Martin's theorem.
Since Definition \ref{definition-of-lift} involves the pull-back
of Chow classes by the morphism of Deligne-Mumford stacks
$[\semistable X G/T] \to [\semistable X G / G]$, information
about the stabilizer groups of the $G$ and $T$ actions is already
encoded in our construction, and we shall not require
a separate statement in the presence of non-trivial stabilizers.

\begin{remark}
We may immediately reduce our
discussion to the case of a connected reductive group $G$
because $\semistable X G = \semistable X {G_0}$,
where $G_0 \subseteq G$ is the
connected component of the identity
(cf.~\cite[Prop.~1.15]{GIT}), from which it follows quickly that 
\begin{equation*}
r_{G,T}^{X, \alphanew} = [G:G_0]\cdot r_{G_0,T}^{X,
  \pi^\ast\alphanew},
\end{equation*}
since the morphism $\pi: [\semistable X {G_0}/ G_0] \to [\semistable
  X G/G]$ 
induced by the
quotient $\semistable X {G_0}  \to
[\semistable X G/G]$
is finite and flat of degree $[G:G_0]$.  
\end{remark}

From now on, we assume
that $G$ is a connected reductive group and $W$ is the associated Weyl group.
Our first main result 
(proved in   
   \newcounter{sectionSafety}
   \setcounter{sectionSafety}{\value{equation}}
   \S \ref{section-independence-of-GIT-ratio})
   \setcounter{equation}{\value{sectionSafety}}
concerns the invariance properties of the
GIT integration ratio $r_{G,T}^{X,\alphanew}$, which \emph{a priori}
could depend on the choice of 
lift $\tilde \alphanew$ or the variety $X$.  In fact, neither is the case:

\begin{theorem}\label{GIT-integral-ratio-is-invariant-of-G-theorem}
  Let $G$ be a connected reductive group over an arbitrary field and $T \subseteq G$ a 
  maximal torus.  If $X$ is a
  projective $G$-variety with a
  $G$-linearized ample
  line bundle $\sL$ for which $\semistable X T = \stable X T$ and
  $\alphanew \in A_0(X\dd G)$ is a Chow $0$-cycle
  satisfying
  $\int_{X\dd G} \alphanew \neq 0$, then
  the  GIT integration ratio $r_{G,T}^{X,\alphanew}$ defined as above
  depends not on the choice of $\sigma$, $T$, $\sL$, or $X$.
  That is, $r_G := r_{G,T}^{X,\alphanew}$ is an invariant of the group
  $G$.
\end{theorem}

Connected reductive groups decompose, up to central isogeny, as the direct
product of a torus and simple algebraic groups
(cf.~\cite[\S14]{bor1}), and 
the GIT integration ratio can be computed in terms of the GIT
integration ratios of the simple algebraic groups appearing in this
decomposition. This is
the content of our second main result (proved in  
\S\setcounter{sectionSafety}{\value{equation}}\ref{section-functorial-properties}): 
\setcounter{equation}{\value{sectionSafety}}

\begin{theorem}\label{GIT-integral-ratio-decomposes-multiplicatively}
  If $G$ is a connected reductive group over a field $k$ and 
  there exists a central 
  isogeny, 
  $G_1 \times \cdots \times G_n \times S \surject G,$
  for connected reductive groups $G_i$ and a torus $S$, then 
  $$r_G = \prod_{i =1}^n r_{G_i}.$$
\end{theorem}

As a result of these theorems, the determination of the
value $r_G$ for connected reductive groups $G$ is reduced to the
computation of $r_G$ on
a single example for each simple group.  We
do this explicitly in
\S\setcounter{sectionSafety}{\value{equation}}\ref{section-calculation}\setcounter{equation}{\value{sectionSafety}}
for the simple group 
$G= PGL(n)$, where we verify $r_G = n! = |W|$.  Thus, we obtain a strictly
algebraic proof of the following corollary:

\begin{corollary}\label{main-theorem}
  Let $G$ be a connected reductive group over a field $k$ and $T
  \subseteq G$ a  
  maximal torus.  If the 
  root system of $G$ decomposes into irreducible root systems of type
  $\mathbf A_{n}$, for various $n \in \N$, then for any
  $G$-linearized ample line bundle on a
  projective $G$-variety $X$ over $k$ satisfying $\stable X T = \semistable X
  T$ and any
  Chow class $\alphanew \in A_0(X \dd G)_\Q$
  with lift $\tilde \alphanew \in A_{\ast}(X\dd T)_\Q$,
  \begin{equation*}
    \int_{X\dd G}\alphanew = \frac{1}{|W|} \int_{X\dd T} \ctop(\mathfrak g / \mathfrak t) \frown
    \tilde \alphanew.
  \end{equation*}
\end{corollary}

We can do even better than  Corollary \ref{main-theorem} if we do not
restrict ourselves to purely algebraic arguments.  
Indeed, we may remove the restriction on root systems,
proving that
the GIT integration
ratio is equal to the order of the Weyl group
for any connected reductive group:

\begin{amplification}\label{amplification-general-result}
  Let $G$ be a connected reductive group over a field $k$ and $T
  \subseteq G$ a  
  maximal torus.  
  For any
  $G$-linearized ample line bundle on a
  projective $G$-variety $X$ over $k$ satisfying $\stable X T = \semistable X
  T$ and any
  Chow class $\alphanew \in A_0(X \dd G)_\Q$
  with lift $\tilde \alphanew \in A_{\ast}(X\dd T)_\Q$,
  \begin{equation*}
    \int_{X\dd G}\alphanew = \frac{1}{|W|} \int_{X\dd T}
    \ctop(\mathfrak g / \mathfrak t) \frown  \tilde \alphanew.
  \end{equation*}
\end{amplification}
The proof of this result
uses the theories of relative GIT (cf.~\cite{sesh1}) and specialization
(cf.~\cite[\S20.3]{ful1}), along with Theorem
\ref{GIT-integral-ratio-is-invariant-of-G-theorem},  to
reduce the proof 
to the case of connected reductive groups over
the complex numbers, where  Martin's result  \cite[Thm.~B$'$]{mar1}
applies.  
This argument is found near the end of the paper
in \S \ref{section-final-remarks}.

\vspace{.2cm}
\noindent {\bf \large Acknowledgments.} 
 It is with pleasure that I thank my thesis advisor Johan de Jong, both for
 teaching me
 algebraic geometry and for his generous guidance that led me through 
 the discovery of these results.  
 I also thank Burt Totaro and the anonymous referees
 for their diligence in reading earlier manuscripts and for their 
 valuable suggestions.

\section*{Notation}
\setcounter{equation}{0}
Reductive groups
\begin{itemize}
\renewcommand{\labelitemi}{\Cdot}
\item $k$ denotes a field and $\bar k$ an algebraic closure of $k$.
\item $e \in T \subseteq B \subseteq G$ denotes a smooth, connected
  reductive group $G$ over $k$ with  identity 
  element $e$, a  maximal torus $T$, and  a Borel subgroup $B$.
\item $\mathfrak g$, $\mathfrak b$, and $\mathfrak t$ denote
  respectively the Lie algebras of $G$, $B$,
  and $T$.
\item   $N_T \subseteq G$ denotes the normalizer of $T$ in $G$.
\item $W := T \backslash N_T$  denotes the Weyl group of
 right cosets of $T$ in its normalizer $N_T$.
\item $\Phi$ denotes the root system corresponding to
  $G_{\bar k}$ and the maximal
  torus $T_{\bar k}$.
\item $\chargp T$ denotes the character group of $T$ and $\cochargp T$
  the group of $1$-parameter subgroups.
\item $\bar G$ denotes a group that is the quotient of $G$ by some
  normal subgroup.
\end{itemize}

Group actions and quotients
\begin{itemize}
\renewcommand{\labelitemi}{\Cdot}
\item $V$ denotes a finite dimensional $G$-representation over $k$.
\item $\P(V)$ denotes the projective space of hyperplanes in $V$, so that $\Gamma(\P(V), \OO(1)) = V$.
\item $X$ denotes a projective variety over $k$ with a \emph{right} $G$-action
  and an ample $G$-linearized line bundle $\sL$.
\item $\stable X G$, $\semistable X G$, $\strictlysemistable X G$ and $\unstable X G$ denote the loci of
  stable, semi-stable, strictly semi-stable, and unstable points for
  a specified $G$-linearized ample line bundle on a projective $G$-variety $X$.
\item $X \dd G$ the uniform categorical
  quotient of the semi-stable locus $\semistable X G$ by right $G$-action.
\item $Y \times_H G$ denotes, for an algebraic space $Y$ with a right
  $G$-action and a closed subgroup $H \subseteq G$,  the
  algebraic space comprising the
  quotient of the product
  $Y \times G$ by the set-theoretically free $H$-action $(y, g)\cdot h
  := (y\cdot h, h\inv \cdot g)$.  If $Y$ is a scheme, then $Y \times_H
  G$ is often a scheme as well (cf.~\cite[Prop.~23]{edi-gra1}).
\item  $[Y/G]$ denotes the stack-theoretic quotient of a right $G$-variety $Y$ by the $G$-action.
\item $BT := [\Spec k/T]$ denotes the Artin stack that is the
  algebraic classifying space of $T$.
\end{itemize}

Chow theory
\begin{itemize}
\renewcommand{\labelitemi}{\Cdot}
\item $A_\ast(-)_R$ (resp.~$A^\ast(-)_R$) denotes the Chow group
  (resp.~operational Chow group), graded by dimension
  (resp.~codimension),  and with coefficients
  in $R$, an abelian group.  Coefficients will be
  taken in $\Z$ by default when the group $R$ is omitted from notation. 
\item $A^G_\ast(-)_R$ (resp.~$A_G^\ast(-)_R$) denotes the $G$-equivariant
  Chow group (resp.~$G$-equivariant operational Chow group) with
  coefficients in an abelian group $R$.
\item $\int_Y \alphanew \in R$ denotes the degree of a Chow class $\alphanew \in
  A_0(Y)_R$ on a proper variety $Y$ over $k$, computed via proper
  push-forward by the structure morphism.
\item $\rootctop := \ctop(\mathfrak g/\mathfrak b) \in A^\ast(BT)$ and
  $\ctop(\mathfrak g/\mathfrak t) \in A^\ast(BT)$ are the
   top  Chern classes of the universal $T$-equivariant vector bundles induced by
  the adjoint representations.
\end{itemize}

\renewcommand{\theequation}{\arabic{section}.\arabic{subsection}.\arabic{equation}}

\renewcommand{\thesubsection}{\arabic{section}.\arabic{subsection}\setcounter{equation}{0}}

\section{Lifting classes on smooth varieties}\label{section-rootctop-on-smooth-x}
\setcounter{equation}{0}

The goal of this section is to prove if $X$ is a smooth
$G$-variety
over an arbitrary field $k$ with a $G$-linearized ample line bundle
for which $\semistable X T = \stable X T$, 
then any Chow class $\alphanew \in A_\ast(X \dd G)_\Q$ gives rise to a 
well-defined class $\rootctop \frown \tilde \alphanew \in A_\ast(X \dd
T)_\Q$ that is independent of the choice of lift $\tilde \alphanew$,
as defined in \S\ref{subsection-definition-of-lift}.
As a consequence, the
independence of the Chow class $\ctop(\mathfrak g / \mathfrak t)
\frown \tilde \alphanew$ follows immediately since
$$\ctop(\mathfrak g /\mathfrak t) = (-1)^{|\Phi|/2} \rootctop
\frown \rootctop.$$
 It is this independence that is needed to make the GIT integration
 ratio $r_{G,T}^{X, \alphanew}$ well-defined.
The case of singular $X$ is treated separately in
\S\ref{strictly-semi-stable-points-section}.

\subsection{Geometric invariant theory}
We begin by briefly reviewing geometric invariant
theory, mainly to set our conventions.  

\begin{definition}
Let $X$ be a 
projective
$G$-variety over $k$ and $\sL$ a
$G$-linearized ample line bundle on $X$, that is, an ample
line bundle $\pi:\sL \to X$ along with $G$-actions on $\sL$ and $X$
for which $\pi$ is
equivariant and on whose fibres $G$ acts linearly.   
\begin{itemize}
\renewcommand{\labelitemi}{\Cdot}
\item  The  \emph{semi-stable locus} is the open subscheme defined by
  $$\semistable X G := \{x \in X: \exists ~n > 0 \textrm{
    and some } \phi \in \Gamma(X, \sL^{\otimes n})^G \textrm{
  satisfying } \phi(x) \neq  0\}.$$
\item  The \emph{stable}\footnote{Sometimes in the literature this is
    referred to as ``properly stable''.} \emph{locus} is the open
  subscheme defined by 
  $$\stable X G := \{ x \in \semistable X G:  x\cdot G \subseteq
  \semistable X G \textrm{ is a 
    closed subscheme and } |\stab_G x| < \infty \}.$$
\item The \emph{unstable locus} is defined to be
  $\unstable X G := X \setminus \semistable X G.$
\item The \emph{strictly semi-stable locus} is defined to be
  $\strictlysemistable X G := \semistable X G \setminus \stable X G.$
\end{itemize}
\end{definition}
\noindent The following theorem justifies the making of the above
definitions:

\begin{theorem}[Mumford]\label{mumford-main-GIT-theorem}
  The semi-stable locus
  for a $G$-linearized line bundle $\sL$ on a projective $G$-variety $X$ 
  admits a uniform categorical quotient $\pi: \semistable X G \to X\dd
  G$ called the \emph{GIT quotient of $X$ by $G$.}
  Moreover, some positive tensor power of $\sL$
  descends to an ample line bundle  
  on the projective variety $X\dd G$, 
  and
  the restriction
  of $\pi$ to the stable locus is a geometric quotient.
\end{theorem}
\begin{proof}
  See  \cite[Thm.~1.10]{GIT}.  
\end{proof}

Next we describe how to compute the stable and
semi-stable loci in practice.  As these loci are compatible with field
extensions (cf.~\cite[Prop. 1.14]{GIT}), we may assume that we are
working over the algebraically closed field $\bar k$.
Let $T\subseteq G$ be a maximal torus with
character group $\chargp{T}$. Equivariantly embed $X$
into $\P(V)$ for some $G$-representation $V$ so that $\OO_{\P(V)}(1)$
pulls-back to the induced $G$-linearized line
bundle $\sL^{\otimes n}$, for some $n > 0$. 
We will call such an embedding \emph{compatible}.
The $G$-representation structure on $V$ endows
$\P(V)$ with a $G$-action
and a $G$-linearization on $\OO(1)$
for which both $\semistable{ \P(V)} G \cap X =
\semistable X G$ and $\stable {\P(V)} G \cap X = \stable X G$.
Since $\bar k$ is algebraically closed, $T$ is diagonalizable and
hence $V$ decomposes
as the direct sum of weight spaces  $V = \oplus_{\chi
  \in \chargp T} V_\chi$.  
\begin{definition}\label{definition-of-state}
  For any $x \in X \subseteq \P(V)$ as above,  the
  \emph{state of $x$} is defined to be  
  $$\Xi(x) := \{ \chi \in \chargp T: \exists~ v \in V_\chi
  \textrm{ such that } v(x) \neq 0 \}.$$
\end{definition}
We now state in the above notation the following well-known numerical
criterion for stability:
\begin{theorem}[Hilbert-Mumford criterion]
  \label{hilbert-mumford-criterion}
  A point $x \in X$ is $T$-semi-stable for the induced $T$-linearization
  on $\sL$
  if and only if $0$ is in the convex hull of $\Xi(x)$ in $\chargp T
  \otimes \Q$.  A point $x \in X$
  is $T$-stable if and only if $0$ is in the interior
  of the convex hull of $\Xi(x)$.  Furthermore, 
  $$\semistable X G = \bigcap_{g \in G}  \semistable X T \cdot g,~~
  \textrm{ and }~~  \stable X G = \bigcap_{g \in G}  \stable X T \cdot
  g.$$
\end{theorem}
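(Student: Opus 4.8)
The plan is to deduce all three assertions from Mumford's numerical criterion for one-parameter subgroups, combined with elementary convex geometry and the conjugacy of one-parameter subgroups into a maximal torus. Since $\semistable{\P(V)}{T}\cap X = \semistable X T$ and $\semistable{\P(V)}{G}\cap X = \semistable X G$ (and likewise for the stable loci), and since the state $\Xi(x)$ is read off from the weight decomposition $V=\oplus_\chi V_\chi$, I may assume $X=\P(V)$ throughout. Because the loci in question are open subschemes of $X$, determined by their geometric points, I may also base change to the algebraic closure $\bar k$ when convenient, where every one-parameter subgroup of $G$ is conjugate into $T$.

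For the torus I would first treat semistability by a direct, characteristic-free argument on $\P(V)$. A $T$-invariant section of $\OO(n)$ is a weight-zero element of $\Sym^n V = \Gamma(\P(V),\OO(n))$, hence a sum of monomials $v_1\cdots v_n$ with $v_i\in V_{\chi_i}$ and $\sum_i \chi_i = 0$. Such a monomial, viewed as a section of $\OO(n)$, is nonzero at $x$ if and only if each factor is nonzero at $x$, which can be arranged exactly when every $\chi_i$ lies in $\Xi(x)$. Consequently a $T$-invariant section nonvanishing at $x$ exists if and only if there is a relation $\sum_i \chi_i = 0$ with all $\chi_i\in\Xi(x)$; after clearing denominators this is precisely the condition that $0$ lie in the convex hull of $\Xi(x)$ in $\chargp T\otimes\Q$, which proves the semistable case.

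For the stable case I would invoke the numerical criterion: $x$ is $T$-stable if and only if the Hilbert--Mumford function $\mu(x,\lambda) = -\min_{\chi\in\Xi(x)}\langle\chi,\lambda\rangle$ is strictly positive for every nonzero one-parameter subgroup $\lambda\in\cochargp T$. The requirement that for each nonzero $\lambda$ some $\chi\in\Xi(x)$ satisfy $\langle\chi,\lambda\rangle<0$ says exactly that $\Xi(x)$ is contained in no closed half-space through the origin, i.e. that $0$ lies in the interior of the convex hull of $\Xi(x)$; this equivalence is the separating-hyperplane theorem. (The same computation with $\geq$ in place of $>$ recovers the semistable criterion, giving a second proof.)

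Finally, for the intersection formulas I would combine the numerical criterion for $G$ with the transformation rule $\mu(g\cdot x,\lambda)=\mu(x,g^{-1}\lambda g)$ and the conjugacy of one-parameter subgroups into $T$. Writing $\lambda=g\lambda' g^{-1}$ with $\lambda'$ a one-parameter subgroup of $T$ gives $\mu(x,\lambda)=\mu(g^{-1}\cdot x,\lambda')$, so ranging over all $\lambda$ and all $g$ shows that $x\in\semistable X G$ if and only if $g^{-1}\cdot x\in\semistable X T$ for every $g\in G$, which is the asserted equality $\semistable X G=\bigcap_{g\in G} g\cdot\semistable X T$, and identically for the stable loci. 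I expect the main obstacle to be the stable case over a non-closed field: one must ensure that the numerical criterion and the conjugacy of tori are applied after base change to $\bar k$, and that finiteness of stabilizers --- equivalently, full-dimensionality of the convex hull --- is matched with the \emph{interior} condition rather than merely with semistability. The remaining ingredients, namely the monomial computation, the separating-hyperplane theorem, and the conjugacy of maximal tori, are standard.
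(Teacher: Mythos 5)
The paper offers no argument for this statement: its ``proof'' is the single line ``See [GIT, Thm.\ 2.1]'', so any actual derivation is necessarily a different route from the paper's. Your proposal is essentially sound and supplies the standard translation that Mumford's book carries out. The direct monomial argument for $T$-semistability on $\P(V)$ is correct and characteristic-free: a $T$-invariant of $\Sym^n V$ is a sum of total-weight-zero monomials in a weight basis, one summand must survive at $x$ if the invariant does, and the existence of a weight-zero monomial with all factors nonvanishing at $x$ is exactly the statement that $0$ is a rational convex combination of elements of $\Xi(x)$. The stable case and the two intersection formulas are then correctly reduced, via the separating-hyperplane theorem and the conjugacy of one-parameter subgroups into $T$ over $\bar k$, to the numerical criterion. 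The one thing to be clear-eyed about is that your argument is not self-contained where it matters most: the equivalence of $\mu(x,\lambda)>0$ (resp.\ $\geq 0$) for all $\lambda$ with stability (resp.\ semistability) \emph{as defined via invariant sections, closed orbits, and finite stabilizers} --- and its validity for the nonabelian group $G$, not just for $T$ --- is precisely the hard content of [GIT, Thm.\ 2.1], which you invoke just as the paper does. So both routes ultimately rest on the same external theorem; what you add is the explicit convex-geometric dictionary and the deduction of $\semistable X G=\bigcap_g g\cdot\semistable X T$. Two small refinements: for a \emph{split} torus the weight decomposition, hence $\Xi(x)$, is already defined over $k$, so the convex-hull criteria require no base change at all --- only the conjugacy step for the intersection formulas genuinely needs $\bar k$, after which one descends because the loci are open and $k$-rational; and the intersection should be read over $g\in G(\bar k)$ (or scheme-theoretically) for the conjugacy argument to apply.
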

\begin{proof}
  See \cite[Thm.~2.1]{GIT}.
\end{proof}

\begin{remark}\label{remark-T-stability-and-G-stability}
  A corollary of the Hilbert-Mumford criterion is that the condition
  in $T$-stability that all semi-stable points are stable,
  $\semistable X T = \stable X T$, automatically implies the analogous
  condition in $G$-stability,  $\semistable X G = \stable X G$.
\end{remark}

We conclude this review by presenting a stratification that describes
the structure of the unstable locus.  The stratification is due to
Kirwan, but relies on  
the previous works of Hesselink \cites{hes1, hes2}, Kempf \cite{kem1} and
Ness \cite{kem-nes1}.

\begin{theorem}[Kirwan]\label{theorem-kirwan-strata-decomposition}
  Let $X$ be a projective $G$-variety, along with a $G$-linearized
  ample line bundle $\sL$, over an algebraically 
  closed field $\bar k$. 
  The unstable locus $\unstable X G$ admits a
  $G$-equivariant stratification,
  $$\unstable X G = \bigcup_{\beta \in \mathbf B} S_\beta,$$ 
  indexed by  a finite partially ordered set  $\mathbf B$, with
  the following properties: 
\begin{enumerate}
\item $S_\beta \subseteq \unstable X G$ is a locally closed
  $G$-equivariant subscheme. 
\item $S_\beta \cap S_{\beta'} = \emptyset$ for $\beta \neq \beta'$.
\item $\overline {S_\beta} \subseteq \bigcup_{\beta' \geq \beta}
  S_{\beta '}$.
\item There exist parabolic subgroups $T \subseteq P_\beta \subseteq
  G$, locally closed
  $P_\beta$-closed subschemes $Y_\beta \subseteq S_\beta \cap
  \unstable X T$,
  and a surjective $G$-equivariant morphism,
  $$\phi: Y_\beta \times_{P_\beta} G \to S_\beta,$$
  \label{item-isomorphism-when-smooth}
  induced by the multiplication morphism $Y_\beta \times G \to S_\beta$.
\end{enumerate}
If moreover $X$ is smooth, then there exists a 
surjection of reductive groups $\pi:G \surject \bar G$ such that the
$G$-actions on $X$ and $\sL$ are induced by $\bar G$-actions,
and $\phi$ descends to an isomorphism
$\bar \phi: Y_\beta \times_{\bar P_\beta} \bar G  \to S_{\beta}$,
where $\bar P_\beta = \pi(P_\beta)$.  
If $\Char k = 0$ or $X$ compatibly embeds into $\P(V)$ for some
faithful $G$-representation $V$, then one can take $G = \bar G$.   
\end{theorem}
\begin{proof}
  If $X$ compatibly embeds into
  $\P(V)$ for some faithful $G$-representation $V$ then
  the theorem holds, with $\phi$ an isomorphism when $X$ is smooth, by 
  \cite[\S 12-\S13]{kir1}.  The general result can then be
  reduced to this case: let $\bar G$ be
  the image of the homomorphism $h:G \to GL(V)$ and let $\bar
  P_{\beta}$ be the parabolic subgroups determined by this faithful
  $\bar G$ action as in \cite[\S 12-\S13]{kir1};
  the theorem then holds with $P_{\beta} := h\inv(\bar
  P_{\beta})^{\red}$. 
  We remark that unlike $\bar \phi$, 
  the morphism $\phi$ may not be an isomorphism
  for general smooth $X$ because
  $h\inv(\bar P_{\beta})$ may be non-reduced if the characteristic
  is positive.
\end{proof}

\subsection{Lifts}
\label{subsection-definition-of-lift}

We now define precisely what it means to lift a Chow class $\alphanew
\in A_\ast(X\dd G)_\Q$ to a class $\tilde \alphanew \in A_\ast(X \dd
T)_\Q$.
We make use of the notion of Chow groups of quotient stacks,
 a review of which may be
 found in the appendix to this paper. 
The main result we will need is that  when $\semistable X G =
\stable X G$, the
quotient $\gitstack X G$ is a proper Deligne-Mumford stack with a
morphism $\phi^G: \gitstack X G \to X \dd G$ that
induces an isomorphism of rational Chow groups
(cf.~Thm. \ref{theorem-gillet-vistoli}): 
\begin{equation*}
\phi^G_\ast: A_\ast(\gitstack X G)_{\Q} \cong A_\ast(X\dd G)_\Q.
\end{equation*}
 Via the identification $\phi^G_\ast$, we may think
of a Chow class  $\alphanew \in A_\ast(\gitstack X G)_\Q$
equivalently as 
$\phi_\ast^G(\alphanew) \in A_\ast(X \dd G)_\Q$, and we will
henceforth denote both classes
by the symbol $\alphanew$.

\begin{definition}\label{definition-of-lift}
If $X$ is a projective $G$-variety with a $G$-linearized ample line bundle for which 
$\semistable X T = \stable X T$, then
a \emph{lift of} $\alphanew \in A_d(X \dd G)_\Q$ is a
class $\tilde 
\alphanew \in A_{d + \dim (G/T)}(X \dd T)_\Q$ 
that satisfies
 $i^\ast (\tilde \alphanew) =
f^\ast(\alphanew)$, where 
 $i:[\semistable X G / T] \inject \gitstack X T$
is the open immersion, and
 $f: [\semistable X G / T] \to \gitstack X G$ 
is the flat fibration with fibre $G/T$.
\end{definition}
\begin{remark}\label{remark-ctop-killing}
By the right exact sequence of Chow groups 
$$A_\ast([\unstable X G \cap\semistable X T / T])_\Q \to A_\ast(
\gitstack X T)_\Q \stackrel {i^\ast} \to A_\ast(\semistable X G / T])_\Q \to 0,$$
any two lifts of a Chow class differ
by the push-forward of an element of $A_\ast([\unstable X G \cap
\semistable X T / T])_\Q$.  
\end{remark}

\subsection{Vanishing on a stratum}\label{rootctop-is-zero-on-each-stratum-section}

Throughout this subsection, we  
assume that $X$ is a
smooth, projective $G$-variety with a $G$-linearized ample line bundle
over an algebraically closed field $\bar k$.
In light of Remark \ref{remark-ctop-killing}, to show that
$\rootctop \frown \tilde \alphanew$ is independent of the choice of lift,
it suffices to show that $\rootctop$ kills all elements in the image
of $A_\ast([\unstable X G \cap\semistable X T / T])_\Q$.  
For now we just prove
that $\rootctop$ vanishes on the $T$-semi-stable locus of each
stratum, $S_\beta \cap \semistable X T$.  Since $X$ is smooth, Theorem
\ref{theorem-kirwan-strata-decomposition} guarantees that the
stratum $S_\beta$ is fibred over a flag variety ${\bar P}_\beta \backslash
\bar G$ of right $\bar P_\beta$-cosets,
and so we begin our study here.

We require some notation related to the Weyl group $W := T \backslash N_T$. 
For a parabolic subgroup $P\subseteq G$ containing the maximal torus $T$,
denote by $W_P$ the subgroup $T \backslash (N_T \cap P) \subseteq W$.
Let the symbol $\dot w$ denote a
choice of representative in $N_T$ of a Weyl class $w \in W$, and
let the symbol $\bar w$ denote the image of $w$ in $W_P\backslash W$.

\begin{lemma}\label{calculating-localization-of-point-lemma}
  Let $P \subseteq G$ be a parabolic subgroup containing $T$, and
  denote by $\mathfrak p$ its Lie algebra.
  The inclusion $i: W_P\backslash W \to P\backslash G$, defined
  by 
  $\bar w \mapsto P \dot w $,
  is the inclusion of the $T$-fixed points of $P\backslash G$,
  and the Gysin pull-back 
  of the $T$-equivariant class $[P] \in A_\ast^T(P \backslash G)$ is given by
  $$i^\ast([P]) = 
\ctop(\mathfrak g/\mathfrak p) \cdot [ \bar e] \in A^T_\ast(W_P
\backslash W),$$ 
  where
  the $T$-action on $\mathfrak g/\mathfrak p$ is via the adjoint
  representation.
\end{lemma}
\begin{proof}
  It is well-known that the $T$-invariant points of $P \backslash G$ are
  precisely  $W_P \backslash W$.
  The Chow group $A^T_\ast(W_P \backslash W)$ is a free
  $A^\ast(BT)$-module with basis given by the elements of $W_P \backslash W$.
  The element $P \in P \backslash G$ is an isolated, nonsingular fixed
  point,
  hence $i^\ast ([P])$ equals the 
  product of $[\bar e]$ with the $T$-equivariant top Chern class of the
  normal bundle of $P$ at $P$, which is just $\ctop(\mathfrak g
  /\mathfrak p)$.
\end{proof}

\begin{lemma}\label{rootctop-is-zero-from-localization-in-G/P-lemma}
  Let $P \subseteq G$ be a parabolic subgroup containing
  $T$, 
  and let $U \subseteq P \backslash G$ denote the open complement of
  the $T$-fixed points $W_P \backslash W \inject P \backslash G$.
  As an element
  of the $T$-equivariant operational Chow group of $U$,
  $$\rootctop = 0 \in A^\ast_T(U).$$
\end{lemma}
\begin{proof}
  The variety $U$ is smooth, so by Poincar\'e duality (Theorem
  \ref{poincare-duality-theorem}) it suffices to prove that
  $\rootctop \frown [U] = 0 \in A_\ast^T(U)$.
  Let $X := P \backslash G$ denote the flag variety,
  and let $i: X^T = W_P \backslash W \to X$ denote the inclusion of the
  $T$-fixed points.  By the right-exact sequence of Chow groups
  $$A_\ast^T(X^T) \stackrel{i_\ast} \to A_\ast^T(X) \to A_\ast^T(U) \to 0,$$
  it suffices to show that $\rootctop \frown [X]$ is in the image of
  $i_\ast$. 
  Since $X$ is a smooth, projective variety, 
  by the localization theorem
  (Theorem  \ref{brions-localization-theorem}),
  $i^\ast:A^\ast_T(X) \to A^\ast_T(X^T)$ is an injective
  morphism of $A^\ast(BT)$-algebras.
  Thus, it suffices to prove that $i^\ast (\rootctop \frown [X])$ is in the
  image of 
  $i^\ast \circ i_\ast$.

  The ring $A^\ast(X^T)$ is a free $A^\ast(BT)$-module with basis
  given by $\{ [\bar w] : \bar w \in W_P \backslash W\}$.
 In terms of this basis, 
  $$i^\ast(\rootctop \frown [X]) = \sum_{\bar w \in W_P \backslash W}
  \rootctop \cdot [\bar w].$$
  By  Lemma \ref{calculating-localization-of-point-lemma}, 
  $i^\ast \circ i_\ast([\bar e]) = 
       \ctop(\mathfrak g/ \mathfrak p)\cdot  [\bar e].$
  Since $i$ is a $W$-equivariant inclusion,  $i^\ast \circ i_\ast$ is compatible with
  the  $W$-action.  Therefore for any $ w \in W$,
  $$i^\ast \circ i_\ast([\bar w]) = (\prod_{\alpha \in \Phi(\mathfrak g /
    \mathfrak p)} \alpha w ) \cdot  [\bar w],$$ 
  where $\Phi(\mathfrak g/ \mathfrak p) \subseteq \Phi$ denotes the
  subset of roots
  that appear in the diagonalization of the adjoint action on
  $\mathfrak g/ \mathfrak p$. 
  Notice that $\Phi(\mathfrak g/\mathfrak p)$ is a subset of
  the negative roots $\Phi^- := \Phi(\mathfrak g / \mathfrak b)$, for
  any choice of Borel subgroup $B \subseteq P$.  Hence, for the Chow class
  $\beta_w := \prod_{\alpha \in \Phi^- \setminus \Phi(\mathfrak g
    / \mathfrak p)} \alpha w \in A^\ast(BT)$,
  \begin{align*}
    i^\ast \circ i_\ast( \beta_w \cdot [\bar w]) & = (\prod_{\alpha \in \Phi^-}
  \alpha w )\cdot [\bar w].
  \end{align*}
  Since $\rootctop = \prod_{\alpha \in \Phi^-} \alpha$, 
  it follows that $\prod_{\alpha \in \Phi^-} \alpha w = \det(w)\cdot
  \rootctop$. 
  Therefore, $\rootctop \frown [X]$ is in the image of $i^\ast \circ i_\ast$:
  $$\sum_{\bar w \in W_P \backslash W} \rootctop \cdot [\bar w] =
  i^\ast \circ i_\ast \left( \sum \det(w) \cdot  \beta_w \cdot [\bar w]\right).$$
\end{proof}

The previous two lemmas
are directly analogous to results in equivariant
cohomology over $\C$ due to Brion (cf.~\cite[Lem.~2]{bri2}).  The
stratification of Theorem \ref{theorem-kirwan-strata-decomposition} is
more subtle for fields of characteristic $p$, and
we hence must stray from Brion's approach.
Even over 
$\C$, our method has the added benefit that most results hold
with integer, and not just rational, coefficients.

\begin{lemma}\label{lemma-group-surjection}
  Let $\pi:G \surject \bar G$ be a surjection of reductive groups over an
  algebraically closed field $\bar k$.  If $\bar k$ is characteristic
  $0$
  let $p =1$ and otherwise
  let $p := \Char k$.  If
  $f:Y \to U \subseteq \bar P \backslash \bar G$ is an equivariant
  morphism of $\bar G$-varieties, where $\bar P := \pi(P)$ and $U$ is
  the complement of the $T$-fixed points, then 
  $$\Delta = \ctop(\mathfrak g/ \mathfrak b) = 0 \in
  A_T^\ast(Y)_{\Z[\frac 1 p]}.$$
\end{lemma}
\begin{proof}
   Lemma \ref{rootctop-is-zero-from-localization-in-G/P-lemma}
   immediately implies that the class $\bar \Delta := \ctop(\bar
   {\mathfrak g} / \bar {\mathfrak b}) = 0 \in A_{\bar T}^\ast(Y)_{\Z}$,
   for the Lie algebras $\bar {\mathfrak g}$ and 
   $\bar {\mathfrak b}$ of $\bar G$ and the Borel subgroup $\bar B =
   \pi(B)$, respectively.   
   Therefore, it suffices to show that the pull-back of
   $\bar \Delta$ divides
   $p^N \cdot \Delta$ in the ring $A^\ast_T(Y)$ for some $N \gg 0$.
   This follows from \cite[5.1-2]{steinberg}, which states that
   $\pi^\ast: \chargp{\bar T} \to \chargp{T}$ is
   an injective homomorphism that takes 
   the roots of $\bar G$ into
   the set $\bigcup_{n=0}^\infty (p^n \cdot \Phi) \subseteq
   \chargp{T}$.

\end{proof}

From the previous lemma, we derive  that
$\rootctop$ is zero on the $T$-semi-stable locus of each stratum
$S_\beta \cap \semistable X T$.

\begin{lemma}\label{rootctop-is-zero-on-S-beta-semistable-lemma}  
  Let $X$ be a 
  smooth, projective $G$-variety with a $G$-linearized ample line bundle
  over $\bar k$.
  As an element
  of the $T$-equivariant operational Chow group of $S_\beta \cap
  \semistable X T$,
  $$\rootctop = 0 \in A^\ast_T(S_\beta \cap \semistable X T)_{\Z[\frac
      1 p]},$$
  where $p > 0$ is equal to the prime characteristic unless the
  characteristic is $0$, in which case $p = 1$.
\end{lemma}
\begin{proof}
  By Theorem
  \ref{theorem-kirwan-strata-decomposition}, there is a
  $G$-equivariant morphism 
  $\pi:S_\beta \to \bar P_\beta \backslash \bar G$ with
  $\pi\inv(\bar P_\beta) = Y_\beta \subseteq \unstable X T$.  
  The normalizer $N_T$ preserves the
  unstable locus $\unstable X T$, so in particular 
  $Y_\beta \cdot N_T \subseteq \unstable X T$.  Hence, the image of
  $\pi$ restricted to  
  $S_\beta \cap \semistable X T$
  is contained in $U$, the open complement in $\bar P_\beta \backslash
  \bar G$ of
  the $T$-fixed points $\bar P_\beta
  \backslash \bar P_\beta \cdot N_T$.  We conclude by invoking Lemma
  \ref{lemma-group-surjection}. 
\end{proof}

\subsection{Unions of strata} 
\label{integral-vanishing-of-rootctop-section}

We continue to assume that $X$ is smooth over an algebraically
closed field $\bar k$, and we extend the vanishing of $\rootctop$ on a
stratum to the vanishing over
the entire locus $\unstable X G \cap \semistable X T$.

\begin{proposition}\label{rootctop-is-zero-proposition}
  Let $X$ be a smooth, projective $G$-variety with a $G$-linearized ample line bundle
  over $\bar k$. 
  The operational Chow class $\rootctop$ annihilates every class in
  $A_\ast^T(\unstable X G \cap \semistable X T)_{\Z[\frac 1 p]}$,
  where $p > 0$ is equal to the prime characteristic unless the
  characteristic is $0$, in which case $p = 1$.
\end{proposition}
\begin{proof}
  Label $\mathbf B = \{\beta_1,\ldots, \beta_n\}$ so that 
  $\mathcal S_k := \bigcup_{i \leq k} S_{\beta_i}$ is a closed
  subvariety of $\unstable X G$, for each $1
  \leq k \leq n$; this is possible by 
  Theorem \ref{theorem-kirwan-strata-decomposition} (3).
  We proceed to inductively prove that $\rootctop$ annihilates every
  class in $A_\ast^T(\mathcal S_k \cap \semistable X T)_\Z$ for all $k
  = 1, \ldots, n$, with $k = n$ being the desired result.
  By Lemma
  \ref{rootctop-is-zero-on-S-beta-semistable-lemma}
  the assertion is true when $k = 1$ and $\mathcal S_1 = S_{\beta
    1}$ is an individual stratum.  
  Assume the result holds for $ 1 \leq k < n$.  By Proposition
  \ref{brions-presentation-of-T-equivariant-Chow-proposition}, it suffices
  to show that $\rootctop \frown [Y] = 0 \in
  A_\ast^T(\mathcal S_{k+1} \cap \semistable X T)$ for any $T$-invariant
  subvariety $Y \inject \mathcal S_{k+1}\cap \semistable X T$.
  Since $\mathcal S_k \inject \mathcal S_{k+1}$ is a
  closed immersion, the inductive
  hypothesis will imply
  $\rootctop \frown [ Y] = 0$ whenever $Y$ is completely contained in
  $\mathcal S_k$.   As $\mathcal S_{k+1} = S_{\beta_{k+1}} \cup
  \mathcal S_{k}$, the only remaining case is when $Y$
  intersects $S_{\beta_{k+1}}$ nontrivially.

  For the sake of clarity, denote $\beta := \beta_{k+1}$.
  Consider the birational map
  $\overline{S_\beta} \dashrightarrow \bar P_\beta\backslash \bar G$ defined on
  $S_\beta$ by 
  $S_\beta \cong Y_\beta \times_{\bar P_\beta} \bar G \to \bar P_\beta
  \backslash \bar G$
  as in Theorem \ref{theorem-kirwan-strata-decomposition}.
  Our strategy will be to partially resolve the locus of
  indeterminacy in the following manner:
  
  $$\xymatrix{ \overline{Y_\beta} \times_{\bar P_\beta} \bar G  \ar[d]_\pi
    \ar[rd]^{\tilde f} & \\ 
    \overline{S_\beta} \ar@{-->}[r]^f & \bar P_\beta \backslash \bar G.}$$
  The morphism $\pi$ is proper, since $\bar P_\beta \backslash \bar G$ is
  so, and is
  an isomorphism when restricted to the dense open $S_\beta \subseteq
  \overline{S_\beta}$.
  Moreover,  $\tilde f \inv(\bar P_\beta) = \overline {Y_{\beta}}$, and by
  $G$-invariance, 
  $$\tilde f\inv(\bar P_\beta W) = \overline {Y_{\beta}}
  \times_{\bar P_\beta} \bar P_\beta \cdot W.$$
  Since $N_T$ preserves $\unstable X T$ and $\overline
  {Y_{\beta}} \subseteq \unstable X T$, it follows that
  $\pi (\tilde f \inv(P_\beta W)) \subseteq \unstable X T$, showing
  that the above 
  diagram restricts to
  $$\xymatrix{ \pi\inv(\overline{S_\beta} \cap \semistable X T) \ar[d]_\pi
    \ar[rd]^{\tilde f} & \\ 
    \overline{S_\beta} \cap \semistable X T \ar@{-->}[r]^f & U,}$$
  where $U \subseteq \bar P_\beta \backslash \bar G$ is the open
  complement of the locus of $T$-fixed points.

  Let $\tilde Y$ denotes the strict transform of $Y$ under the birational
  morphism $\pi$.  From Lemma \ref{lemma-group-surjection} it follows
  that $\rootctop \frown [\tilde Y] = 0$.
  At the same time, $\pi_\ast(\rootctop \frown [\tilde Y]) = \rootctop
  \frown [Y]$ by the projection formula, so $\rootctop \frown [Y] = 0
  \in A_\ast^T(\overline{S_\beta} \cap \semistable X T)$.
  By the projection formula for the closed immersion 
  $\overline {S_{\beta}} \inject \mathcal S_{k+1}$, we conclude that
  $\rootctop \frown 
  [Y] = 0 \in A_\ast^T(\mathcal S_{k+1} \cap \semistable X T)$.
\end{proof}

\subsection{Arbitrary fields}\label{rootctop-is-zero-generally-section}
We now relax our previous assumption, henceforth allowing an arbitrary
base field $k$.  
The stratification of Theorem
\ref{theorem-kirwan-strata-decomposition}
 is constructed over an
algebraically closed field, so our previous arguments do not
immediately apply.  
However, if we weaken our statements by ignoring torsion, considering only
Chow groups with rational coefficients,  our
previous results quickly extend to this more general setting.
We outline the proof of the following well-established lemma
(cf.~\cite[Lem. 1A.3]{blo1}) for the reader's convenience.

\begin{lemma}\label{algebraic-field-extension-lemma}
  If $X$ is a variety over a field $k$, then any field extension $K/k$
  induces an injective morphism between Chow groups with 
  rational coefficients: $A_\ast(X)_\Q \inject A_\ast(X_K)_\Q$.
\end{lemma}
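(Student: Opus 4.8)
The plan is to realize the map as a flat pull-back and then prove injectivity by reducing, through successive factorizations of the extension $K/k$, to two tractable cases: finite extensions and purely transcendental ones. Since every field extension is flat, base change produces a flat morphism $g \colon X_K \to X$ of relative dimension $0$, and hence a flat pull-back $g^\ast \colon A_\ast(X)_\Q \to A_\ast(X_K)_\Q$ (cf.\ \cite[\S1.7]{ful1}); it is this map whose injectivity we must establish.

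First I would reduce to the case that $K/k$ is finitely generated. Writing $K = \colim_i K_i$ as the filtered colimit of its finitely generated subextensions $K_i/k$, one has $X_K = \varprojlim_i X_{K_i}$ with affine flat transition maps, so that continuity of Chow groups gives $A_\ast(X_K)_\Q = \colim_i A_\ast(X_{K_i})_\Q$ and $g^\ast$ is the colimit of the pull-backs $g_i^\ast$. Because a class $\alpha$ maps to $0$ in a filtered colimit only if it already vanishes at some finite stage, injectivity of every $g_i^\ast$ yields injectivity of $g^\ast$. Next, any finitely generated $K/k$ factors as $k \subseteq L := k(t_1,\dots,t_r) \subseteq K$ with $K/L$ finite; by functoriality of flat pull-back, $g^\ast$ is the composite of the pull-backs attached to these two extensions, so it suffices to treat the finite and purely transcendental cases separately.

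The finite case is where the rational coefficients are essential. If $[K:k] = d$, then $g$ is finite flat of degree $d$, so proper push-forward $g_\ast$ is defined and the projection formula gives $g_\ast g^\ast = d \cdot \id$ on $A_\ast(X)_\Q$ (cf.\ \cite[\S1.7]{ful1}). As $d$ becomes an invertible scalar after tensoring with $\Q$, the operator $g_\ast g^\ast$ is an isomorphism, forcing $g^\ast$ to be injective. For the purely transcendental case it suffices, by induction on $r$, to treat $K = k(t)$. Here $X_{k(t)} = \varprojlim_U X \times U$ over the nonempty opens $U \subseteq \A^1_k$, so by continuity a class killed by $g^\ast$ already has its projection pull-back $q^\ast \alpha$ vanishing on some $X \times U$, where $q \colon X \times U \to X$ is the projection. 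Choosing a closed point $z \in U$ and applying the Gysin map for the regular embedding $X_{k(z)} = X \times \{z\} \inject X \times U$, the compatibility of Gysin with flat pull-back identifies the restriction of $q^\ast \alpha$ with the pull-back of $\alpha$ along the finite morphism $X_{k(z)} \to X$; this pull-back therefore vanishes, and the already-established finite case, applied to $k(z)/k$, gives $\alpha = 0$.

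The main obstacle is the purely transcendental case: all the substantive geometric input — continuity of Chow groups under the relevant inverse limit, the identification of $X_{k(t)}$ with the generic fibre of $X \times \A^1 \to \A^1$, and the compatibility of the Gysin pull-back along a fibre with flat pull-back — is concentrated there, and one must keep track of the dimension shift (by the transcendence degree) that accompanies passage to the generic fibre. The finite case, by contrast, is a one-line transfer argument, and the reductions to finitely generated extensions and then to these two standard types are purely formal.
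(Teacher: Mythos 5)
Your proof is correct, and its skeleton matches the paper's: both reduce, via the colimit description of $A_\ast(X_K)_\Q$ over sub-extensions, to the two cases of a finite extension and a simple transcendental extension, and both dispose of the finite case by the transfer argument $g_\ast g^\ast = [K:k]\cdot \id$, which is invertible with $\Q$-coefficients. Where you genuinely diverge is in the transcendental case. The paper computes $A_\ast(X_{k(t)})_\Q$ outright: it uses the decomposition $A_i(X\times \P^1_k)_\Q \cong A_{i-1}(X)_\Q \oplus A_i(X)_\Q\cdot t$ from \cite[Thm.~3.3]{ful1}, notes that the fibre-class summand dies in the colimit over nonempty opens $U\subseteq \P^1$, and concludes that the base-change map is actually an \emph{isomorphism} onto $A_\ast(X_{k(t)})_\Q$. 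You instead argue by specialization: once $q^\ast\alpha$ vanishes on some $X\times U$, the Gysin restriction to a closed fibre $X_{k(z)}$, together with the compatibility of Gysin maps with flat pull-back, shows that the pull-back of $\alpha$ along the finite extension $k(z)/k$ vanishes, and the already-established finite case finishes. Your route proves only injectivity (which is all the lemma asserts) and trades the projective-bundle formula for the Gysin compatibility; the paper's route costs about the same effort but yields the stronger conclusion that purely transcendental base change induces an isomorphism on rational Chow groups, a fact it does not subsequently use. Both arguments are sound.
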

\begin{proof}
  If a field extension $E/k$ is the union of a directed system of sub-extensions
  $E_i/k$, 
  then $A_\ast(X_E) =
  \varinjlim  A_\ast(X_{E_i})$.  We may apply this result to
  the extension $K/k$ to reduce the proof
  to the two cases: $K/k$ is finite; or $K= k(x)$ for a transcendental
  element $x$.  Let $\phi: X_K \to X$ denote the base change morphism.

  If $K/k$ is finite,
  then $\phi$ is
  flat and  proper, and the composition $\phi_\ast \circ \phi^\ast$ is
  simply multiplication by $[K:k]$. This is an 
  isomorphism since coefficients are rational, and hence $\phi^\ast$
  is injective.  
  
  If $K = k(x)$, then $X_K$ is the generic fibre of the projection
  $\pi: X \times 
  \P^1_k \to \P^1_k$.  If $\psi: X \times \P^1_k \to X$ denotes the
  other projection and $j:X_K \to X \times \P^1_k$ is the inclusion of
  the generic fibre, then $\phi = \psi \circ j$. There is an
  isomorphism of Chow groups,  
  $$A_{i+1}(X \times \P^1_k)_\Q \cong A_{i}(X)_\Q \oplus A_{i+1}(X)_\Q
  \cdot t,$$
  where $t$ is the class associated to a fibre of $\pi$.
  The pull-back $\psi^\ast: A_i(X)_\Q \to A_{i+1}(X \times
  \P^1_k)_\Q$ is identified 
  with $\id \oplus 0: A_i(X)_\Q \to A_{i}(X)_\Q \oplus A_{i+1}(X)_\Q \cdot
  t$ (cf.~\cite[Thm.~3.3]{ful1}).
  As schemes,
  $$X_K = \varinjlim_{\emptyset \neq U \subseteq \P^1} X \times U,$$
  and there is an induced isomorphism on the level of Chow groups:
  $$A_i(X_K)_\Q \cong  \varinjlim_{\emptyset \neq U \subseteq \P^1}
  A_{i+1}(X\times U)_\Q \cong A_{i}(X)_\Q \oplus 0.$$
  Under this identification, the pull-back $j^\ast: A_{i+1}(X \times
 \P^1_k)_\Q \to A_{i}(X_K)_\Q$ is the projection
 $A_{i}(X)_\Q \oplus A_i(X)_\Q\cdot t \to A_i(X)_\Q$.
  From this description, it is clear that $\phi^\ast = j^\ast \circ
  \psi^\ast$ is an isomorphism, ergo injective.
\end{proof}

\begin{remark}\label{remark-integral-coefficients}
  When the extension $K/k$ is purely transcendental, the above proof
  actually shows that the pull-back $A_\ast(X) \inject A_\ast(X_K)$ is
  an injective group homomorphism between Chow groups with integer
  coefficients. 
\end{remark}

\begin{proposition}\label{rootctop-is-torsion-over-any-field-proposition}
  Let $X$ be a smooth, projective $G$-variety with a $G$-linearized ample line bundle
  over an arbitrary field $k$. The operational Chow class $\rootctop$
  annihilates every class in 
  $A_\ast^T(\unstable X G \cap \semistable X T)_\Q$.
\end{proposition}
\begin{proof}
Lemma \ref{algebraic-field-extension-lemma} reduces the proof to the
case of an algebraically closed base field, which is the content of Proposition
  \ref{rootctop-is-zero-proposition}.
\end{proof}

We state an immediate corollary that we will later
 extend to the case of
singular varieties (cf.~Prop. \ref{rootctop-is-numerically-zero-for-singular-X}). 

\begin{corollary}\label{rootctop-is-numerically-zero-for-smooth-X}
  Let $X$ be a smooth, projective $G$-variety over a field $k$, with a $G$-linearized ample line bundle
  for which $\semistable X T = \stable  X T$. 
  If $\tilde \alphanew_0, \tilde \alphanew_1 \in A_\ast(X \dd T)_\Q$ 
  are two lifts of the same class $\alphanew \in A_\ast(X \dd G)_\Q$,
  then for any operational Chow class $\arbclass \in A^\ast(X\dd T)_\Q$,
  $$\int_{X\dd T}  \arbclass \frown (\rootctop \frown \tilde
  \alphanew_0) =
  \int_{X\dd T}  \arbclass \frown (\rootctop \frown \tilde \alphanew_1).$$
\end{corollary}

\section{Singular varieties and strictly semi-stable points}\label{strictly-semi-stable-points-section}
\setcounter{equation}{0}

We wish to extend the results of 
\S\ref{section-rootctop-on-smooth-x}
 to the case
of singular $X$ by 
studying a compatible closed immersion $j: X \inject
\P(V)$, but several obstacles
must be overcome.
The first of which is the regrettable fact that
the push-forward map $j_\ast: A_\ast( \gitstack X T)_\Q \to
A_\ast( \gitstack {\P(V)} T)_\Q$ is not injective in general, thus
preventing an easy 
reduction to the smooth case. 
Suppose we attempt to circumvent
this problem by
limiting our ambitions to showing that any Chow class
$\alphanew \in A_\ast(X\dd G)_\Q$ gives rise to a well-defined 
\emph{numerical} equivalence
class $\rootctop \frown \tilde \alphanew$
independent of the choice of lift $\tilde \alphanew$.
Ideally the closed immersion $j$ would induce an injective map between
the groups of algebraic cycles modulo numerical equivalence, and we
would reduce our proof to the smooth case.
Unfortunately, there is the possibility that
$\semistable{\P(V)} T \neq \stable{\P(V)} T$,
and in this case there is no 
notion of numerical equivalence on the
Artin stack $\gitstack{(\P(V))}T$ (cf.~\cite{EGS}). Once again,
the na\"ive argument breaks down.

Our solution is to
build an auxiliary smooth, projective
$G$-variety $Y$ with a $G$-linearized ample line bundle for 
which $\semistable Y {T} = \stable Y {T}$,
and then to relate
integration on $X\dd G$ and $X\dd T$ to integration on $Y \dd G$ and
$Y \dd T$.
If one is guaranteed $G$-equivariant resolutions of
singularities (e.g. if $\Char k = 0$), then a result of
Reichstein \cite{rei1} generalizes the partial desingularizations
of Kirwan \cite{kir2} and produces such an auxiliary variety $Y$.  Since
resolution of singularities is still an open problem in
positive characteristic, we
provide an independent construction.
As an application of our method, we prove in \S
\ref{subsection-rootctop-is-numerically-zero} an extension
of Corollary \ref{rootctop-is-numerically-zero-for-smooth-X} to
singular varieties.

\subsection{Construction}

The auxiliary variety $Y$ will be defined as
an iterated product of the flag variety 
of right $B$-cosets, for a Borel subgroup $B$ containing the maximal torus
$T$.
The only delicate
point is the choice of a suitable $G$-linearized line bundle, which will rely upon
an understanding of equivariant line bundles over flag varieties.

\begin{lemma}\label{lemma-weight-line-bundle-linearization}
  For any weight $\chi \in \chargp T$ in the interior of the positive Weyl
  chamber, the 
  line bundle $\sL(\chi) := \A^1 \times_{\chi,B} G$
  is a very ample $G$-linearized line bundle on
  $B\backslash G$. 
   For any nontrivial subtorus $T' \subseteq T$ stabilizing some point 
   $Bg \in B\backslash G$, 
   the induced action on the fibre of $\sL(\chi)$ over $Bg$
  is by the restriction to $T'$ of the weight $( \chi \cdot w )$,
  where $w \in W$ corresponds to the Bruhat cell
  containing  $Bg$.
\end{lemma}
\begin{proof}
  That $\sL(\chi)$ is a very 
  ample line bundle on $B\backslash G$ is simply a rephrasing of the
  standard construction in representation theory of highest weight
  modules for dominant weights.  
  The Bruhat 
  decomposition states that $B\backslash G = \coprod_{w \in W} BwU$, for $U
  \subseteq B$ the maximal normal unipotent subgroup.  Furthermore, denoting by
  \makebox{$\dot w \in N_T$}
  a lift of
  the element $w \in W = T \backslash N_T$, there is an isomorphism
  $$\phi: B \times U_w 
  \to B \dot wU \subseteq G$$
  sending $(b, u)
  \mapsto b\dot w u$, where $U_w := U \cap (U^{-})^w$ is the
  intersection of $U$ with the $\dot w$-conjugate of the opposite
  unipotent subgroup, 
  $(U^-)^w := \dot w\inv U^- \dot w$
  (cf.~\cite[Thm.~14.12]{bor1}).
  Note that $U_w$ is  
  normalized by $T$ because both $U$ and $(U^-)^w$ are.
  We write $Bg = B\dot w u$ for some $w \in W$ and $u \in U_w$, and we
  assume $Bg$ is stabilized by the right action of $t \in T'$.
  Observe
  $$ \dot w u t = t^{\dot w \inv} \cdot \dot w \cdot u^t,$$
  where  $t^{\dot w \inv} := \dot w t \dot
  w\inv \in T \subseteq B$ and $u^t := t\inv u t \in U_w$. 
  Since $t$ stabilizes $Bg$, there exists some $b \in
  B$ such that $b \dot w u = t^{\dot w \inv} \cdot \dot w \cdot u^t$.
  Since $\phi$ is an isomorphism, it must be that
   $u = u^t$,  $b = t^{\dot w \inv}$, and
  $\dot w u t =  t^{\dot w \inv} \cdot \dot w u$.  From this equality, it
  is clear that $t$ acts
  on the fibre of $\sL(\chi)$ via multiplication by $\chi(t^{\dot w\inv}) =
  (\chi \cdot w)(t)$. 
\end{proof}

The following lemma will be the inductive step in the argument showing
that for any $G$-variety $Z$ with a $G$-linearized ample line bundle,
the variety 
$Y := Z \times 
(B\backslash G)^{\rank T}$  
admits a $G$-linearized ample line bundle for which
$\semistable Y T = \stable Y T$ and
all points in $Y$ projecting to a stable point of $Z$ are stable in $Y$.

\begin{lemma}\label{inductive-step-lemma}
  Let $\sL_Z$ be a $G$-linearized ample line bundle on a projective
  $G$-variety $Z$.
  If $r > 0$ is the maximum rank of a subtorus of $T$ that stabilizes
  a strictly semi-stable point 
  in $\strictlysemistable Z T$,  then
  there is a 
  character $\chi \in \chargp T$ 
  and an integer $n \gg 0$ so that the induced
  right $G$-linearized line bundle $\sL_Z^{\otimes n} \otimes
  \sL(\chi)$ on $Z \times B\backslash G$ is ample and
  has the following properties: 
  \begin{enumerate}
  \item The  subtori of $T$ that stabilize points in 
    $\strictlysemistable{(Z \times B\backslash G)}{T}$
    are at most rank $r -1$.
  \item 
    A point $p \in Z \times B\backslash G$ is stable
    (resp.~unstable)  whenever $\pi(p)$ is so, 
    where $\pi: Z \times B\backslash G \to Z$ is projection onto the
    first factor and the notion of stability is taken with respect to either
    the $G$- or the $T$-action.
  \end{enumerate}
\end{lemma}
\begin{proof}
  Let  $T_1,\ldots , T_n$ denote the positive-dimensional subtori
  of $T$ occurring as the connected components of stabilizers of points
  $z \in \strictlysemistable Z T$.
  Let $H_i := \Lambda^\ast(T/T_i) \subset \chargp T$ denote the
  subgroup of $T$-characters vanishing on $T_i$.
  Since $T_i$ is positive dimensional,  the inclusion $H_i \subsetneq
  \chargp T$ is strict.  Choose an integral weight
  $\chi$ in the interior of the positive Weyl
  chamber of $\chargp{T}$ avoiding the $W$-orbit of any $H_1, \ldots, H_n$.
  By Lemma \ref{lemma-weight-line-bundle-linearization}, $\sL(\chi)$ is
  a $G$-linearized ample line bundle on $B \backslash G$.

  By choosing $n$ large enough, one can derive from the
  Hilbert-Mumford criterion (Thm.~\ref{hilbert-mumford-criterion}),
  without too much effort,  that the linearized line bundle
  $\sL_Z^{\otimes n} \otimes \sL(\chi)$ satisfies:
  \begin{itemize}
    \renewcommand{\labelitemi}{\Cdot}
  \item $p \in Z \times B\backslash G$ is stable if $\pi(p)$ is
   stable,
  \item $p \in Z \times B\backslash G$ is unstable if $\pi(p)$ is
    unstable,
  \end{itemize}
  for stability with respect to either the $G$- or the $T$-action.
  This also follows immediately from a more general theorem of Reichstein
  (cf.~\cite[Thm.~2.1]{rei1}).
  As a consequence, any
  strictly semi-stable 
  point $p \in \semistable {(Z \times B\backslash G)} {T}$ must sit above
  a strictly 
  semi-stable point $\pi(p) \in \semistable Z T$. 

  If $T'$ is a torus stabilizing some point
  $(z,Bg) \in \strictlysemistable{(Z \times B\backslash G)} {T}$, then
  $z \in \strictlysemistable Z T$.
  Therefore $T' \subseteq T_i$ for some $1 \leq i \leq n$, and the
  weight of the action of $T'$ on the fibre of $\sL_Z$ over $z$ is
  $0$. Since $T'$ stabilizes $Bg$,  Lemma
  \ref{lemma-weight-line-bundle-linearization} implies 
  the weight of the action
  of $T'$ on the fibre of $\sL(\chi)$ over $Bg$ is 
  $ \chi \cdot w$ for some $w \in W$.
  Therefore, the weight of the  action of $T'$ on the fibre of
  $\sL_Z^{\otimes n}  \otimes \sL(\chi)$ over $(z, Bg)$ 
  is $0 +  (\chi\cdot w)|_{T'}$.
  Since $(z, Bg)$ is strictly semi-stable, 
  this weight must be $0$ and so $T' \subseteq \ker\left( (\chi\cdot
  w)|_{T_i} \right)$. 
  Our choice of $\chi$ was such that
  $( \chi \cdot w)|_{T_i} \neq 0 \in \chargp{T_i}$ for any $
  w\in W$, and thus the rank of $T'$ is at most the rank of 
  $\ker\left((\chi \cdot w)|_{T_i}\right )$, which is $r - 1$.
\end{proof}

\begin{proposition}\label{trivial-flag-variety-bundle-prop}
  If $Z$ is a projective $G$-variety with a $G$-linearized ample line bundle
  then for $r := \rank T$, the variety $Y := Z \times (B\backslash
  G)^{r}$ admits a $G$-linearized ample line bundle for which
  \begin{enumerate}[(i)]
  \item $\semistable Y {} = \stable Y {}$, and 
  \item $\stable Z {} \times (B\backslash G)^{r} \subseteq \semistable Y {}
    \subseteq \semistable Z {} \times (B\backslash G)^{r}$,
  \end{enumerate}
  for both $T$- and $G$- (semi-)stability.
\end{proposition}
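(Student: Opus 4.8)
The plan is to iterate Lemma~\ref{inductive-step-lemma} exactly $r := \rank G$ times, peeling off one dimension of the offending stabilizer tori at each step. Starting from $Z$, I would first observe that the positive-dimensional subtori occurring as $(\stab_T z)^0$ for $z \in \strictlysemistable Z T$ all have rank at most $r$, since they are subtori of the rank-$r$ torus $T$. Applying Lemma~\ref{inductive-step-lemma} to $Z$ produces a character $\chi_1$ and an integer $N_1$ so that, with the linearization $\sL^{\otimes N_1} \boxtimes L(\chi_1)$ on $Z \times G/B$, the subtori arising as identity components of stabilizers of strictly semi-stable points drop to rank at most $r-1$, while stability and instability are inherited from the projection to $Z$. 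I would then feed $Z_1 := Z \times G/B$ (with this new linearization) back into the lemma to obtain $Z_2 := Z_1 \times G/B = Z \times (G/B)^2$ with stabilizer-rank bound $r-2$, and continue inductively.

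After $r$ applications I arrive at $Y = Z \times (G/B)^r$ equipped with a linearization of the form $\sL^{\otimes N} \boxtimes L(\chi_1) \boxtimes \cdots \boxtimes L(\chi_r)$ (after absorbing the successive tensor powers into a single exponent $N$), for which the positive-dimensional stabilizer tori of strictly semi-stable points have rank at most $r - r = 0$. That is, every strictly semi-stable point of $Y$ has finite stabilizer. By Lemma~\ref{positive-dimensional-stabilizer-lemma}, the existence of any strictly semi-stable point would force some such point to carry a positive-dimensional stabilizer; the contrapositive gives $\semistable Y T = \stable Y T$, establishing claim~(i) for $T$-stability. For $G$-stability, I would invoke the Hilbert-Mumford characterization (Theorem~\ref{hilbert-mumford-criterion}), namely $\stable Y G = \bigcap_{g \in G} g \cdot \stable Y T$ and likewise for semi-stability, so that $\semistable Y T = \stable Y T$ immediately propagates to $\semistable Y G = \stable Y G$.

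Claim~(ii) follows by composing the inclusions supplied by property~(2) of Lemma~\ref{inductive-step-lemma} across all $r$ stages. At each stage that lemma guarantees that a point is stable (resp.\ unstable) whenever its image under the projection forgetting the last flag factor is stable (resp.\ unstable). Composing the stability half of these implications down the tower yields $\stable Z {} \times (G/B)^r \subseteq \semistable Y {}$, and composing the instability half---equivalently, that semi-stability descends---yields $\semistable Y {} \subseteq \semistable Z {} \times (G/B)^r$, for both $T$ and $G$. I would note that the appropriate $G$-statements again follow from the $T$-statements via Theorem~\ref{hilbert-mumford-criterion}, since the projections $\pi$ are $G$-equivariant.

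The main obstacle I anticipate is purely bookkeeping rather than conceptual: Lemma~\ref{inductive-step-lemma} is stated as a single inductive step for a product $Z \times G/B$, so I must verify that its hypotheses are genuinely re-established after each application---in particular, that the linearization $\sL^{\otimes N_j} \boxtimes L(\chi_j)$ produced at stage $j$ is itself a legitimate $G$-linearization on $Z_{j-1}$ to which the lemma applies again, and that the choices of $\chi_j$ and $N_j$ at later stages can be made compatibly (the lemma requires $N \gg 0$, and each stage may impose a new lower bound, so I take the maximum). Since each stage strictly decreases the rank bound and the construction only ever appends another $G/B$ factor with an ample $L(\chi_j)$, the ampleness and $G$-equivariance of the total linearization are preserved throughout, and the induction terminates cleanly after exactly $r$ steps. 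I expect no difficulty beyond tracking these parameters carefully.
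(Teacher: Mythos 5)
Your proposal is correct and follows essentially the same route as the paper: recursive application of Lemma \ref{inductive-step-lemma} to drive the rank of stabilizer tori of strictly semi-stable points down to zero, Lemma \ref{positive-dimensional-stabilizer-lemma} to conclude $\semistable Y T = \stable Y T$, property (2) of the inductive step for the inclusions in (ii), and the Hilbert--Mumford criterion to pass from $T$- to $G$-stability. The bookkeeping concerns you raise about re-establishing the hypotheses at each stage are legitimate but routine, exactly as you anticipate.
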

\begin{proof}
  We prove the results for $T$-stability, and then 
  the result for $G$-stability will follow 
  (cf.~Thm.~\ref{hilbert-mumford-criterion}).
  Recursively applying Lemma \ref{inductive-step-lemma}, we obtain a
  $G$-linearized line bundle on $Z \times (B\backslash G)^{r}$ for which no
  $T$-strictly 
  semi-stable points have positive dimensional $T$-stabilizers.
  Hence, all $T$-semi-stable points are $T$-stable, proving (i).
  Lemma \ref{inductive-step-lemma}(2) guarantees (ii).
\end{proof}

\subsection{Integration}

Here we prove that the integration of Chow classes and their lifts on
GIT quotients of a
singular variety which equivariantly embeds into a nonsingular variety
can be related to the 
integration of Chow classes and lifts on an auxiliary variety
constructed as in the previous subsection.
We begin with an easy lemma describing how lifting Chow classes
commutes with proper pushes-forward.

\begin{lemma}\label{lemma-lifts-commute-with-proper-pushforward}
  If $\pi: X \to Y$ is an equivariant proper morphism of
  $G$-varieties admitting $G$-linearized ample line bundles so that
  $\semistable X T = \stable X T$, $\semistable
  Y T = \stable Y T$, and $\pi\inv(\semistable Y T) = \semistable X T$, then
  for any Chow class $\alphanew \in A_\ast(X \dd G)_\Q$ and any lift 
  $\tilde  \alphanew \in 
  A_\ast(X \dd T)_\Q$, the push-forward $\pi_\ast\tilde \alphanew \in
  A_\ast(Y \dd T)_\Q$ is a
  lift of the Chow class $\pi_\ast\alphanew \in A_\ast(Y \dd G)_\Q$.
\end{lemma}
\begin{proof}
   The Hilbert-Mumford criterion
  (Thm.~\ref{hilbert-mumford-criterion}) implies
  that $\pi\inv(\semistable Y G) = \semistable X G$, yielding
   the following fibre square,
$$\xymatrix{\semistable X G \ar@{^(->}[r]^{i_X} \ar[d]_\pi & \semistable X T
    \ar[d]_\pi\\
    \semistable Y G \ar@{^(->}[r]^{i_Y} & \semistable Y T,\\}$$
  where the horizontal arrows are open
  immersions, and the vertical arrows are proper morphisms.
  By Definition \ref{definition-of-lift}, the Chow classes
  $\alphanew, \tilde \alphanew$ correspond to
  classes $\alphanew \in A_\ast([\semistable X G/ G])_\Q$ and
  $\tilde \alphanew \in 
  A_\ast([\semistable X T / T])_\Q$ satisfying $i_X^\ast \tilde \alphanew =
  f_X^\ast\alphanew$, where $f_X: [\semistable X G/G] \to [\semistable X
    G/T]$ is induced by the inclusion $T \subseteq G$.  

  All that must be verified is whether
  $$i_Y^\ast (\pi_\ast \tilde \alphanew) \stackrel ? =  f_Y^\ast (\pi_\ast
  \alphanew),$$
  where the morphism $f_Y: \gitstack Y G \to [\semistable Y G/T]$
  is induced by the inclusion $T \subseteq G$.
  Since the above diagram is a fibre square, there is the
  commutativity 
  $i_Y^\ast \circ \pi_\ast = \pi_\ast \circ i_X^\ast$
  (cf.~\cite[Prop. 1.7]{ful1}).
  Therefore, 
  \begin{align*}
    i_Y^\ast (\pi_\ast \tilde \alphanew) & 
    = \pi_\ast( i_X^\ast \tilde \alphanew)\\
    & = \pi_\ast(f_X ^\ast \alphanew), 
  \end{align*}
  since $\tilde \alphanew$ is a lift of $\alphanew$.  It remains 
  to show that $\pi_\ast \circ f_X^\ast = f_Y^\ast
  \circ \pi_\ast$, which follows from the fact that the 
  commutative diagram of Deligne-Mumford stacks,
  $$\xymatrix{ [\semistable X G/T] \ar[r]^{f_X} \ar[d]_{\pi_T} & \gitstack
    X G \ar[d]_{\pi_G}\\
    [\semistable Y G/T] \ar[r]^{f_Y} & \gitstack Y G,}\\$$
  is a fibre square, as demonstrated by the following lemma.
\end{proof}

\begin{lemma}\label{lemma-fibre-square-of-stacks}
  If $H \subseteq G$ is a closed subgroup of $G$ and $f: Y \to Z$ is
  an equivariant morphism of $G$-varieties, then $f$ induces the
  following 2-Cartesian diagram of quotient stacks:
\begin{equation}\label{diagram-fibre}
\xymatrix{ [Y/{H}] \ar[r] \ar[d] & 
    [Y/ G] \ar[d]\\
    [ Z/ H] \ar[r] &  [Z/G].}
\end{equation}
\end{lemma}
\begin{proof}
  To begin, we recall that for a closed subgroup $H \subseteq G$ and
  any $G$-variety $U$,
  there is a Morita equivalence, corresponding to the isomorphism of
  quotients stacks 
  \begin{equation}\label{morita-equivalence}
    [U/H] \cong [(U \times_H G)/G],
  \end{equation}
  given by the operation $-\times_H G$ that converts an $H$-torsor
  mapping to $U$ into a $G$-torsors mapping to $U \times_H G$.  

  Note how the automorphism of $U \times G$ given by the rule $(u, g)
  \mapsto (ug, g\inv)$ intertwines the right action of $H$ on the $G$-factor
  with the right action given by the rule $(u, g)\cdot h :=
  (uh, h\inv g)$.  This induces a $G$-equivariant isomorphism 
  $$U \times G/H \cong U \times_H G,$$
  where the $G$-action on $U \times G/H$ is given by
  the rule
  $(u, gH)\cdot \hat g = (u\hat g, \hat g\inv gH)$,
  and the $G$-action on $U \times_H G$ by the rule
  $(u, g) \cdot \hat g = (u, g \hat g)$.
  Combined with \eqref{morita-equivalence}, this gives an isomorphism
  of quotient stacks
  \begin{equation}\label{equation-quotient-stack-isomorphism}
    [(U\times G/H)/G] \cong [U/H].
  \end{equation}
  Under this identification,
  the natural  morphism $[U/H] \to [U/G]$ induced by the inclusion
  $H \subseteq G$  corresponds to the morphism 
  $[(U \times G/H)/G] \to [U/G]$   
  induced by the $G$-equivariant projection $U \times G/H \to U$.  
  Finally, since the $G$-equivariant diagram
  $$\xymatrix{ Y \times G/H \ar[r]\ar[d] & Y  \ar[d]\\
    X \times G/H \ar[r] & X \\}$$
  is Cartesian, so too will be the corresponding
  diagram of quotient stacks in the $2$-category of stacks. 
\end{proof}

We are now prepared to prove the main result of this subsection.

\begin{proposition}\label{prop-comparing-integrals-on-auxiliary-space}
  Let $j:X \inject Z$ be an equivariant embedding of $G$-varieties, and
  let $\sL$ be a $G$-linearized ample line bundle on $Z$.  There is a
  $G$-linearized ample line bundle on $Y := Z \times (B\backslash  G)^{r}$, where $r
  := \rank T$, for which $\semistable Y T = \stable Y T$ and
  furthermore,  if  $\semistable X T = \stable X T$ for the induced
  $G$-linearized line bundle  $j^\ast \sL$ on $X$, then  for any $\alphanew \in
  A_\ast(X \dd G)_\Q$ with lift $\tilde \alphanew \in A_\ast( X \dd
  T)_\Q$, there is a class $\betanew \in A_\ast(Y \dd
  G)_\Q$ with lift   $\tilde \betanew \in A_\ast(Y \dd T)_\Q$ so that 
  \begin{enumerate}
  \item $\int_{X\dd G} \alphanew = \int_{Y\dd G}
    \betanew,$ \hspace{0.5ex} and 
  \item $\int_{X\dd T} \arbclass \frown (\rootctop \frown
    \tilde \alphanew)  = 
    \int_{Y\dd T} \arbclass \frown (\rootctop \frown \tilde  \betanew),
    \hspace{1ex} \forall \arbclass \in A^\ast(X\dd T)_\Q.$
  \end{enumerate}

\end{proposition}
\begin{proof}
  Let $\sM$ denote the $G$-linearized ample line bundle on $Y$
  prescribed in Proposition
  \ref{trivial-flag-variety-bundle-prop}. 
  Define the variety $\hat X := X \times (B \backslash G)^{r}$ and the
  embedding $\hat j = j \times \id: \hat X \inject Y$.  Endow
  the line bundle $\hat j^\ast \sM$ on $\hat X$ with the
  $G$-linearization induced from that on $\sM$.  Proposition
  \ref{trivial-flag-variety-bundle-prop}  guarantees that  
  $\semistable Y {T} = \stable Y {T}$, and hence $\semistable {\hat X}
  {T} = \stable {\hat X} {T}$.  Moreover, it also shows that $\hat j \inv(\semistable Y {T}) = \semistable {\hat X} {T}$ and
  $\pi\inv(\semistable X {T}) = \semistable {\hat X}{T}$, for $\pi: \hat X
  \to X$ the projection morphism.
  Since $\pi|_{\semistable X G}$ is surjective, there exists a class $\rho \in
  A_\ast(\hat X \dd G)_\Q$ such that $\pi_\ast \rho = \alphanew$.
  Let $\tilde \rho \in A_\ast(\hat X \dd T)_\Q$ be a lift of the Chow
  class $\rho$ for which $\pi_\ast(\tilde \rho) = \tilde \alphanew$,
  which is possible because $\pi|_{\semistable X T}$ is surjective.  
  By Lemma \ref{lemma-lifts-commute-with-proper-pushforward}, $\tilde
  \betanew := \hat j_\ast\tilde \rho$ is a lift of the class
  $\betanew := \hat j_\ast\rho$.  

  By the functoriality of the Chow groups under proper
  push-forward, 
  the commutative diagram,
  $$\xymatrix{\hat X \dd G  \ar[r]^{\hat j} \ar[d]_\pi 
    & Y \dd G \ar[d]\\
   X \dd G \ar[r] & \Spec k,}$$
  shows that $\int_{X \dd G} \alphanew = \int_{Y \dd G} \tau$, proving
  (1).
  The equality in (2)
  follows in the same way from an analogous diagram involving
  $T$-quotients, recalling that the
  projection-formula guarantees the compatibility of the operational Chow
  class $\arbclass \frown \rootctop$ under pushes-forward.
\end{proof}

\subsection{Application}\label{subsection-rootctop-is-numerically-zero}

\begin{proposition}\label{rootctop-is-numerically-zero-for-singular-X}
  Let $X$ be a projective variety  over
  a field $k$ with a $G$-linearized ample line bundle for which
  $\semistable X T = \stable  X T$. 
  If $\tilde \alphanew_0, \tilde \alphanew_1 \in A_\ast(X \dd T)_\Q$ 
  are two lifts of the same class $\alphanew \in A_\ast(X \dd G)_\Q$,
  then for any operational Chow class $\arbclass \in A^\ast(X\dd T)_\Q$,
  $$\int_{X\dd T} \arbclass \frown (\rootctop \frown \tilde
  \alphanew_0) =
  \int_{X\dd T} \arbclass \frown (\rootctop \frown \tilde \alphanew_1).$$
\end{proposition}

\begin{proof}
Embed the singular $X$ into the smooth variety $\P(V)$ via some high
tensor power of the given $G$-linearized line bundle.  Construct the smooth
$G$-variety $Y$ as in Proposition \ref{prop-comparing-integrals-on-auxiliary-space}
by taking $Z := \P(V)$.  The proposition
guarantees that any two lifts 
$\tilde \alphanew_0$ and $\tilde \alphanew_1$ of a class $\alphanew \in
A_\ast(X\dd G)_\Q$ have analogues $\tilde \betanew_0$ and $\tilde
\betanew_1$ both lifting a class $\betanew \in A_\ast(Y \dd G)_\Q$ and
satisfying   
$$\int_{X\dd T} \arbclass \frown (\rootctop \frown \tilde \alphanew_i)
= \int_{Y\dd T} \arbclass \frown (\rootctop
\frown \tilde \betanew_i),$$
for $i = 0,1$.  The result immediately follows from the smooth case
(cf.~Cor. \ref{rootctop-is-numerically-zero-for-smooth-X}).
\end{proof}

\section{Invariance of the GIT integration ratio}
\label{section-independence-of-GIT-ratio}
\setcounter{equation}{0}

The goal of this section is to prove Theorem
\ref{GIT-integral-ratio-is-invariant-of-G-theorem}, that for a
projective $G$-variety
$X$ over a field $k$ and a $G$-linearized ample line bundle for which
$\semistable X T = \stable X T$,
the following ratio 
is an invariant of the group $G$:
\begin{definition}\label{definition-GIT-integral-ratio}
  Assume $X$ is a projective $G$-variety with a $G$-linearized ample
  line bundle $\sL$
  for which $\semistable X T = \stable X
  T$, and  $\alphanew \in A_0(X \dd G)$ is a $0$-cycle
  satisfying $\int_{X\dd G} \alphanew \neq 0$.  We define the
  \emph{GIT integration ratio} to be
  $$r_{G,T}^{X,\alphanew} :=
  \frac{ \int_{X\dd T} \ctop(\mathfrak g / \mathfrak t) \frown \tilde  \alphanew}
       { \int_{X\dd G} \alphanew},$$
  where $\tilde \alphanew$ is some lift of the class $\alphanew$.  
\end{definition}

Note that the GIT integration ratio is well-defined, as can be seen by
taking $\arbclass = (-1)^{|\Phi|/2} \rootctop$ and applying Proposition
\ref{rootctop-is-numerically-zero-for-singular-X}.
We shall next prove in stages that $r_{G,T}^{X,\alphanew}$
is independent of the choice of 
Chow class $\alphanew \in A_0(X\dd G)_\Q$, the choice
of maximal torus $T$, and even the choices of $G$-linearized ample
line bundle $\sL$ and projective $G$-variety $X$. 

\subsection{Chow class}
\label{subsection-independence-on-chow-class}

\begin{lemma}\label{independent-of-class-DM-case-lemma}
  The GIT integration ratio $r^{X}_{G,T} := r^{X,\alphanew}_{G,T}$ is
  independent of the choice of Chow class $\alphanew\in A_0(X\dd G)_\Q$.
\end{lemma}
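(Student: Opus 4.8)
The plan is to prove that the ratio $r^{X,\alpha}_{G,T}$ does not depend on the choice of $0$-cycle $\alpha \in A_0(X\dd G)_\Q$ by reducing to the case of two cycles whose integrals coincide, and then showing the numerators also coincide. Since $r^{X,\alpha}_{G,T}$ is defined as a quotient of two linear functionals of $\alpha$, the real content is that numerator and denominator are \emph{proportional} as $\alpha$ varies. First I would observe that both assignments
\[
\alpha \longmapsto \int_{X\dd G}\alpha
\qquad\text{and}\qquad
\alpha \longmapsto \int_{X\dd T}\ctop \frown \tilde\alpha
\]
are well-defined $\Q$-linear functionals on $A_0(X\dd G)_\Q$: the first is manifestly linear, and the second is well-defined by Corollary \ref{rootctop-is-numerically-zero-for-singular-X} (independence of lift) together with the linearity of the lifting construction, since one may choose lifts additively. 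Thus it suffices to show these two functionals are scalar multiples of one another, i.e. that the second vanishes whenever the first does.

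The key step is therefore to prove: if $\alpha \in A_0(X\dd G)_\Q$ satisfies $\int_{X\dd G}\alpha = 0$, then $\int_{X\dd T}\ctop \frown \tilde\alpha = 0$. To see this, I would use that $A_0$ of a proper variety is generated by classes of closed points, and that over $\bar k$ any two closed points of the irreducible variety $X\dd G$ are rationally equivalent as $0$-cycles (since $X \dd G$ is connected and proper, $A_0(X\dd G)_\Q \cong \Q$ via the degree map). Consequently $\ker\big(\int_{X\dd G}\big)$ is spanned by differences $[p]-[q]$ of point classes that are already rationally equivalent, so such an $\alpha$ is itself rationally equivalent to $0$ in $A_0(X\dd G)_\Q$; then $\tilde\alpha$ is a lift of the zero class, and by Corollary \ref{rootctop-is-numerically-zero-for-singular-X} (applied with one lift being $0$) we get $\int_{X\dd T}\ctop \frown \tilde\alpha = 0$. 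Passing between $k$ and $\bar k$ is handled by Lemma \ref{algebraic-field-extension-lemma}, which gives an injection $A_\ast(-)_\Q \inject A_\ast(-_{\bar k})_\Q$ compatible with degrees.

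The main obstacle I expect is the bookkeeping around lifts: one must check that the lifting operation $\alpha \mapsto \tilde\alpha$ can be chosen compatibly with the $\Q$-vector space structure, so that a lift of a $\Q$-linear combination is the corresponding combination of lifts, \emph{and} that the resulting numerator is genuinely independent of these choices. Both points are underwritten by Remark \ref{remark-ctop-killing} and Corollary \ref{rootctop-is-numerically-zero-for-singular-X}: any two lifts of the same class differ by a push-forward from $A_\ast([\unstable X G \cap \semistable X T / T])_\Q$, on which $\ctop = (-1)^{|\Phi^+|}\rootctop \frown \rootctop$ acts as $0$ after integration. Once linearity and lift-independence of both functionals are in hand, the proportionality—and hence the independence of $r^{X,\alpha}_{G,T}$ from $\alpha$—follows formally, so I would keep the argument at the level of these structural observations rather than computing any explicit intersection numbers.
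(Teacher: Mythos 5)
Your overall strategy (both the numerator and the denominator are linear in $\alpha$, so it suffices to show they are proportional) is sound, but the key step fails: it is \emph{not} true that a connected proper variety has $A_0(X\dd G)_\Q \cong \Q$ via the degree map, nor that any two closed points of $X\dd G$ are rationally equivalent. Rational equivalence of $0$-cycles is a very fine relation: already for a smooth projective curve of genus $\geq 1$ over $\bar k$ one has $A_0(C)_\Q \cong \Q \oplus \mathrm{Jac}(C)(\bar k)\otimes\Q$, which is far larger than $\Q$, and GIT quotients $X\dd G$ carry no special structure (such as rational connectedness) that would force triviality of $A_0$. So a degree-zero class $\alpha$ need not be the zero class, and you cannot conclude that $\tilde\alpha$ is a lift of $0$ in order to invoke Corollary \ref{rootctop-is-numerically-zero-for-singular-X}.

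The fix is exactly where the paper goes: replace rational equivalence by \emph{algebraic} equivalence. Two closed points of a connected projective scheme \emph{are} algebraically equivalent, since such a scheme is algebraically connected (any two points are joined by a connected chain of possibly singular curves); hence $B_0(X\dd G)_\Q = \Q$, where $B_\ast$ denotes Chow groups modulo algebraic equivalence. One then observes that both $\int_{X\dd G}\alpha$ and $\int_{X\dd T}\ctop\frown\tilde\alpha$ are numerical invariants and therefore depend only on the algebraic equivalence class of $\alpha$ (an algebraic equivalence can be spread over a curve and lifted compatibly, and degree is constant in such families). Combined with invariance of the ratio under scaling, this gives independence of $\alpha$. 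Your structural observations about lifts (Remark \ref{remark-ctop-killing} and Corollary \ref{rootctop-is-numerically-zero-for-singular-X}) remain correct and useful, but they cannot substitute for the passage from rational to algebraic equivalence.
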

\begin{proof}
  The definition of $r^{X,\alphanew}_{G,T}$ is
  independent of the algebraic equivalence class of $\alphanew$, since
  numerical equivalence is coarser than algebraic equivalence.  Let
  $B_\ast(-)$ denote the quotient of the Chow group $A_\ast(-)$ by the
  relation of algebraic equivalence (cf.~\cite[\S 10.3]{ful1}).
  All connected projective schemes are algebraically
  connected, i.e. there is a connected chain of (possibly singular)
  curves connecting any two closed points. Therefore,
  $B_0(X\dd G)_\Q = \Q$, and the result follows since
  $r^{X,\alphanew}_{G,T}$ is invariant under the multiplication of
  $\alphanew$ by a nonzero scalar.
\end{proof}

\subsection{Maximal torus}

We begin by reducing to the case where $k = \bar k$ is
an algebraically closed field.

\begin{lemma}\label{lemma-independent-under-field-extension}
  Let $X$ be a projective $G$-variety over a field
  $k$ equipped with a $G$-linearized ample line bundle $\sL$ for which
  $\semistable X T = \stable X T$.  
  For any field extension $K/k$, the GIT integration ratios are equal,
  $$r_{G,T}^X = r_{G_K,T_K}^{X_K},$$
  where $X_K$, $G_K$, and $T_K$ denote the base changes from $k$ to $K$
  of $X$, $G$, and $T$, respectively.
\end{lemma}
\begin{proof}
 By  \cite[Prop. 1.14]{GIT}, 
 $\semistable X G \times K = \semistable {(X_K)} {G_K}$ for the
 induced $G_K$-linearization on $\sL_K$ over
 $X_K$ (and
 similarly with $T$ replacing $G$).  
  Since field extensions are flat, 
 taking the uniform categorical quotient commutes
 with extending the field, so 
 $(X\dd G)_K \cong X_K\dd G_K$ and $(X\dd T)_K \cong X_K\dd T_K$.
 The result can then be deduced from the fact that the degree of a Chow
 class is invariant 
 under field extension (cf.~\cite[Ex. 6.2.9]{ful1}).
\end{proof}

\begin{lemma}\label{independent-of-maximal-torus-lemma}
  The GIT integration ratio $r_G^X := r_{G,T}^X$ does not depend on the
  choice of maximal torus $T$.
\end{lemma}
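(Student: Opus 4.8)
The plan is to exploit the conjugacy of split maximal tori. Fix two split maximal tori $T_1, T_2 \subseteq G$. By the Borel--Tits theorem on conjugacy of maximal split tori in a reductive group (cf. \cite[\S 20]{bor1}), there exists an element $g \in G(k)$ with $g T_1 g\inv = T_2$. My aim is to show that acting by this single element sets up an isomorphism between \emph{all} of the GIT data attached to $T_1$ and to $T_2$, under which the integrand $\ctop \frown \tilde\alpha$ and its integral are preserved, while the denominator $\int_{X\dd G}\alpha$ is left literally unchanged.

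First I would record the twisted equivariance of the automorphism $\phi_g \colon X \to X$, $x \mapsto g\cdot x$. Since $\phi_g$ is part of the $G$-action, it lifts to the $G$-linearization $\sL$, and for $t \in T_1$ and $x \in X$ one has $\phi_g(t\cdot x) = (gtg\inv)\cdot \phi_g(x)$; thus $\phi_g$ intertwines the $T_1$-linearized action with the $T_2$-linearized action via the conjugation isomorphism $c_g \colon T_1 \to T_2$. Because semistability and stability for a torus depend only on the linearized action, $\phi_g$ carries $\semistable X {T_1}$ isomorphically onto $\semistable X {T_2}$ and $\stable X {T_1}$ onto $\stable X {T_2}$; in particular the hypothesis $\semistable X {T_1} = \stable X {T_1}$ transfers to $T_2$. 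Passing to quotients, $\phi_g$ and $c_g$ together induce compatible isomorphisms of the proper Deligne--Mumford stacks and their coarse spaces $\gitstack X {T_1} \cong \gitstack X {T_2}$ and $X \dd T_1 \cong X \dd T_2$, and also $[\semistable X G / T_1] \cong [\semistable X G / T_2]$ (note $\semistable X G$ is $G$-invariant, hence preserved by $\phi_g$). These identifications carry the entire diagram defining a lift in Definition \ref{definition-of-lift} --- the open immersion $i$ and the $G/T$-fibration $\pi$ --- to the corresponding diagram for $T_2$, so a lift $\tilde\alpha_1 \in A_\ast(\gitstack X {T_1})_\Q$ of $\alpha \in A_\ast(\gitstack X G)_\Q$ is carried to a lift $\tilde\alpha_2$ of $\alpha$.

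It remains to check that $\ctop$ matches up and that the denominator is unchanged. Under $c_g$ the adjoint action identifies $\mathfrak g/\mathfrak t_1$ with $\mathfrak g/\mathfrak t_2$ as equivariant bundles, carrying the root system $\Phi(G,T_1)$ onto $\Phi(G,T_2)$; hence the induced isomorphism $A^\ast(BT_1) \cong A^\ast(BT_2)$ sends $\ctop$ to $\ctop$, and therefore $\ctop \frown \tilde\alpha_1 \mapsto \ctop \frown \tilde\alpha_2$. As the stacks are proper, these isomorphisms commute with the degree map, giving $\int_{X\dd T_1}\ctop\frown\tilde\alpha_1 = \int_{X\dd T_2}\ctop\frown\tilde\alpha_2$. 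Finally, since $g \in G$ and the quotient map $\semistable X G \to X\dd G$ is $G$-invariant, $\phi_g$ descends to the identity on $X\dd G$, so $\int_{X\dd G}\alpha$ is literally the same number in both computations. Dividing yields $r_{G,T_1}^{X,\alpha} = r_{G,T_2}^{X,\alpha}$, and Lemma \ref{independent-of-class-DM-case-lemma} removes any residual dependence on $\alpha$. The main obstacle is the input from the structure theory of reductive groups, namely the $G(k)$-conjugacy of split maximal tori over the possibly non-closed base field $k$; once this is granted, the remainder is bookkeeping, since acting by a group element is functorial across every stack and Chow group in sight.
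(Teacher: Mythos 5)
Your proposal is correct and follows essentially the same route as the paper: both use the $G(k)$-conjugacy of split maximal tori to produce a single translation automorphism of $X$ that intertwines the two torus actions, carries semi-stable loci, quotient stacks, and lifts to one another, and preserves degrees, while descending to the identity on $X\dd G$. Your write-up is somewhat more explicit than the paper's on two points it leaves implicit --- that conjugation identifies the root systems and hence matches up $\ctop$, and that the denominator is literally unchanged --- but these are refinements of the same argument, not a different one.
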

\begin{proof}
  By Lemma \ref{lemma-independent-under-field-extension}, we may
  assume that $k = \bar k$.
  For any two maximal tori $T, T' \subseteq G$,
  there exists some $g \in G(\bar k)$ such that  $T' =  gTg\inv$.
  By assumption, $G$ acts linearly on
  the projective variety $X$.
  Consider the map $\psi:T \to T'$ given by
  $t \mapsto g t g\inv$, and the map $\Psi:X \to X$ given by $x
  \mapsto x\cdot g\inv$.  The pair of maps $(\psi,\Psi)$ show that
  the actions $\sigma: X \times T \to X$ and $\sigma': X
  \times T' \to X$ are isomorphic: $\Psi(x)\cdot \psi(t) = \Psi(x\cdot
  t)$.  By the Hilbert-Mumford numerical
  criterion (Thm.~\ref{hilbert-mumford-criterion}), 
  $\semistable X {T'} =  \Psi(\semistable X T)$, and
  hence  the following square is commutative, 
  $$\xymatrix { \semistable X G \ar[r]^{ \Psi} \ar@{_(->}[d]_{i_{T}} &
    \semistable 
    X G \ar@{_(->}[d]_{i_{T'}}\\
    \semistable X T \ar[r]^{ \Psi} & \semistable X {T'}, }$$ 
  where the vertical arrows are open immersions and the horizontal arrows
  are isomorphisms.
  From this diagram, it is clear that 
  for any lift $\tilde \alphanew \in A_\ast(X \dd T)_\Q$ of a Chow
  class $\alphanew \in  A_0(X\dd G)_\Q$, 
  the push-forward $ \Psi_\ast \tilde \alphanew \in A_\ast(X \dd T')_\Q$ 
  is a lift of $\Psi_\ast \alphanew = \alphanew$.
  We use the pairs 
  $(\alphanew, \tilde \alphanew)$ and $( \alphanew,  \Psi_\ast
  \tilde \alphanew)$ to
  compute the GIT integration ratios.  Since $ \Psi$ is an
  isomorphism, the classes $\ctop(\mathfrak g / \mathfrak t) \frown \tilde \alphanew$ and 
  $\Psi_\ast (\ctop(\mathfrak g / \mathfrak t) \frown \tilde
  \alphanew) = \ctop(\mathfrak g / \mathfrak t') \frown \Psi_\ast 
  \tilde \alphanew$ have the same degree, where $\mathfrak t '$
  denotes the Lie algebra of $T'$.   Therefore, the ratios
  $r_{G,T}^{X}$ and $r_{G,T'}^{X}$ are
  equal. 
\end{proof}

\subsection{Linearized line bundle and variety}

\begin{lemma}
  The GIT integration ratio $r_G := r_{G}^X$ does not depend on either the
  choice of $G$-linearized ample line bundle $\sL$ or the projective $G$-variety $X$.  
\end{lemma}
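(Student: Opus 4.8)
The plan is to show that any two $G$-linearized projective varieties $X_0$ and $X_1$ yield the same ratio by finding a common ``bridge'' through which both ratios can be compared. The key observation is that the ratio $r_{G,T}^{X,\alpha}$ is already known (by the preceding two lemmas) to be independent of the Chow class $\alpha$ and of the split maximal torus $T$, so we need only produce, for any two varieties, a chain of equalities linking $r_G^{X_0}$ to $r_G^{X_1}$. The natural candidate for the bridge is a \emph{product}: given $X_0$ and $X_1$, form $X_0 \times X_1$ with a suitable $G$-linearization (for instance $\sL_0 \boxtimes \sL_1$, or a tensor power thereof to guarantee ampleness and control of the stable/semi-stable loci). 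The strategy is then to compare the ratio on $X_0 \times X_1$ to the ratio on each factor separately.

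First I would verify that the product carries a $G$-linearization for which $\semistable{(X_0 \times X_1)}{T} = \stable{(X_0 \times X_1)}{T}$, so that the ratio is even defined there. If this fails directly, I would invoke the auxiliary-space construction of \S\ref{strictly-semi-stable-points-section}: by Proposition \ref{trivial-flag-variety-bundle-prop} one can pass from $X_0 \times X_1$ to $(X_0 \times X_1) \times (G/B)^r$, which has no strictly $T$-semi-stable points, and by Proposition \ref{prop-comparing-integrals-on-auxiliary-space} the relevant integrals---both $\int_{\cdot \dd G} \beta$ and $\int_{\cdot \dd T} \rootctop \frown \tilde\beta$---are preserved under this passage. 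Hence the ratio is unchanged, and we may assume $\semistable{} {} = \stable{}{}$ throughout.

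Next I would compute the ratio on the product by choosing a convenient class. The essential input is that integration on a product GIT quotient factors: if one selects $\alpha = \alpha_0 \boxtimes \alpha_1$ with $\alpha_0 \in A_0(X_0 \dd G)_\Q$ and $\alpha_1$ the fundamental class of $X_1 \dd G$ (or more precisely a top-dimensional class integrating to a known value), then $\int_{(X_0 \times X_1)\dd G} \alpha$ splits as a product of integrals over the factors. The class $\ctop = \prod_{\alpha \in \Phi}\alpha$ lives on $BT$ and pulls back compatibly under the projections, so that $\ctop \frown \tilde\alpha$ on the product relates the $T$-integral on $X_0 \times X_1$ to those on the factors. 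Carrying this through, the numerator and denominator defining $r_G^{X_0 \times X_1}$ each factor, and comparing with the definitions of $r_G^{X_0}$ and $r_G^{X_1}$ yields the equality $r_G^{X_0 \times X_1} = r_G^{X_0} = r_G^{X_1}$, forcing $r_G^{X_0} = r_G^{X_1}$.

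The main obstacle I anticipate is the factorization of the $T$-integral $\int_{(X_0\times X_1)\dd T} \ctop \frown \widetilde{\alpha_0 \boxtimes \alpha_1}$. Unlike the GIT quotient by $G$, the torus quotient $(X_0 \times X_1)\dd T$ is \emph{not} simply the product $(X_0 \dd T) \times (X_1 \dd T)$---the semi-stable locus for $T$ on a product need not respect the product structure, and a lift $\tilde\alpha$ on the product need not be a box product of lifts on the factors. The delicate step is therefore to show that the discrepancy between these does not affect the integral of $\ctop \frown \tilde\alpha$; here I would lean on Corollary \ref{rootctop-is-numerically-zero-for-singular-X}, which guarantees that $\rootctop \frown \tilde\alpha$ (and hence $\ctop \frown \tilde\alpha$) is independent of the choice of lift, so that any convenient lift---in particular one adapted to the product structure---may be used without changing the value. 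Once the independence of lift is exploited to choose a product-compatible lift, the factorization becomes a routine consequence of the projection formula and the multiplicativity of degrees under products of proper morphisms.
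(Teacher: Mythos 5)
There is a genuine gap, and it lies in the core comparison mechanism. Your bridge is the product $X_0\times X_1$ with the \emph{diagonal} $G$-action, and the argument hinges on the claim that the numerator and denominator of $r_G^{X_0\times X_1}$ each factor into contributions from $X_0$ and from $X_1$. They do not: the GIT quotient of a product by a diagonally acting group is not the product of the quotients. Already on dimensions, $\dim\bigl((X_0\times X_1)\dd G\bigr)=\dim X_0+\dim X_1-\dim G$, while $\dim(X_0\dd G)+\dim(X_1\dd G)=\dim X_0+\dim X_1-2\dim G$. (Contrast Lemma \ref{product-of-groups-GIT-ratio-lemma}, where the \emph{group} is also a product acting factor-wise, so the quotient genuinely splits; that is the only situation in which the degree-of-a-product argument is available.) Consequently $\alpha_0\boxtimes\alpha_1$ lives on $(X_0\dd G)\times(X_1\dd G)$, not on $(X_0\times X_1)\dd G$, and a ``top-dimensional class integrating to a known value'' is not a $0$-cycle, so $\int$ of it is undefined. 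The same objection applies, more severely, on the $T$-side; independence of the lift (Corollary \ref{rootctop-is-numerically-zero-for-singular-X}) cannot rescue a factorization that fails already for the underlying quotients. Concretely, for a linearization $\sL_0^{\otimes N}\boxtimes\sL_1$ with $N\gg 0$ the quotient $(X_0\times X_1)\dd G$ fibres over $X_0\dd G$ with fibre $X_1$ rather than $X_1\dd G$, so the product sees at most one of the two ratios, never both symmetrically.

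The repair is to use a common \emph{ambient space} rather than a product. Embed $X_i\inject\P(V_i)$ equivariantly via high tensor powers of the linearizations, and then embed both $X_0,X_1\inject\P(V_0\oplus V_1)$ as disjoint closed $G$-subvarieties by setting the extraneous coordinates to zero. Applying Proposition \ref{trivial-flag-variety-bundle-prop} to $Z=\P(V_0\oplus V_1)$ yields a single auxiliary variety $Y=Z\times(G/B)^r$ with $\semistable Y T=\stable Y T$, and Proposition \ref{prop-comparing-integrals-on-auxiliary-space} transports each pair $(\alpha_i,\tilde\alpha_i)$ to a pair $(\beta_i,\tilde\beta_i)$ on $Y$ preserving both integrals, so $r_G^{Y,\beta_i}=r_G^{X_i,\alpha_i}$. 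Lemma \ref{independent-of-class-DM-case-lemma} applied on $Y$ then gives $r_G^{Y,\beta_0}=r_G^{Y,\beta_1}$, hence $r_G^{X_0}=r_G^{X_1}$. Your instincts to invoke the auxiliary construction for strictly semi-stable points and the independence of the lift were correct; it is the factorization step that has no valid substitute and must be replaced by transport of cycles into a common space.
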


\begin{proof}
  For each $i = 1,2$, let $X_i$ be a
  projective $G$-variety with a $G$-linearized ample line bundle
  $\sL_i$ 
  for which $\semistable {(X_i)} T = \stable {(X_i)} T$.  
  Some high tensor power of $\sL_i$ 
  defines a $G$-equivariant embedding $j_i: X_i \inject \P(V_i)$ for some
  $G$-representation $V_i$.
  Consequently, one derives an equivariant embedding $\hat j_i: X_i \inject \P(V_1
  \oplus V_2)$
  from the embedding $j_i$, simply by setting the
  extraneous coordinates to 0. 
  Moreover, the embeddings $\hat j_i$ are compatible with the
  $G$-linearization on the
  line bundle $\OO(1)$  on  $\P(V_1 \oplus V_2)$,  induced from the direct sum
  representation of $G$ on $V_1 \oplus V_2$, and the given
  $G$-linearizations on the line bundles $\sL_i$.
  By Proposition \ref{prop-comparing-integrals-on-auxiliary-space},
  for any classes $\alphanew_i \in A_0(X_i\dd G)_\Q$, there are classes
  $\betanew_i \in A_0(Y\dd G)_\Q$, for some smooth, projective
  $G$-variety $Y$ with a $G$-linearized ample line bundle for which 
  $\semistable Y T = \stable Y T \neq \emptyset$
  and $r_{G}^{Y,\betanew_i} = r_{G}^{X_i,\alphanew_i}$.
  By Lemma \ref{independent-of-class-DM-case-lemma}, 
  $r_{G}^{Y,\betanew_1} = r_{G}^{Y,\betanew_2}$ and therefore
  $r_{G}^{X_1, \alphanew_1} =  r_G^{X_2,\alphanew_2}$. 
\end{proof}

 The above lemmas combine to demonstrate
 Theorem~\ref{GIT-integral-ratio-is-invariant-of-G-theorem}.

\section{Direct products and central isogenies} 
\label{section-functorial-properties}
\setcounter{equation}{0}
\renewcommand{\theequation}{\arabic{section}.\arabic{equation}}

In this section, we prove that the GIT integration ratio behaves well
with respect to the group operations of direct product and central
isogeny.  The results of this section combine to prove Theorem
\ref{GIT-integral-ratio-decomposes-multiplicatively}.

\begin{proposition}\label{product-of-groups-GIT-ratio-lemma}
  If $G_1, G_2$ are two reductive groups over a field $k$, then
  $r_{G_1\times G_2} =  r_{G_1} \cdot r_{G_2}$.
\end{proposition}
\begin{proof}
  For each $i = 1,2$,  choose a projective $G_i$-variety $X_i$ and a
  $G_i$-linearized ample line bundle $\sL_i$.  Let $T_i \subseteq G_i$ denote a maximal torus.
  Clearly $G_1  \times  G_2$ acts on $X_1 \times X_2$, and $\sL_1
  \otimes \sL_2$ is a $G_1 \times G_2$-linearized ample line bundle
  for which $\semistable {(X_1 \times X_2)} {G_1 \times G_2} =
  \semistable {(X_1)} {G_1} \times \semistable {(X_2)} {G_2}$.  
  Let $\alphanew_i \in A_0^{G_i}( \semistable {(X_i)} {G_i})$, and consider
  $\alphanew:=\alphanew_1\times \alphanew_2 \in A_0^{G_1 \times G_2}(
  \semistable{(X_1)} {G_1} \times \semistable{(X_2)}{G_2}).$  Also,
  take $\tilde \alphanew := \tilde \alphanew_1 \times \tilde \alphanew_2$,
  where each $\tilde \alphanew_i$ lifts $\alphanew_i$.
  By Theorem~\ref{GIT-integral-ratio-is-invariant-of-G-theorem},
  it suffices to 
  calculate the GIT integration ratio for $G_1 \times G_2$ using the
  classes $\alphanew$ and $\tilde \alphanew$.  The degree of
  a product of two classes is the product of the degrees, and so
  the result follows since the isomorphism
  $[\Spec k/T] \cong [\Spec k /T_1] 
  \times_k [\Spec k/T_2]$ identifies
  $\ctop(\mathfrak g_1 / \mathfrak t_1) \times \ctop(\mathfrak
  g_2/\mathfrak t_2)$ with $\ctop(\mathfrak g / \mathfrak t)$.
\end{proof}

Now we prove that $r_G$ is invariant under
central isogeny.  This will completing the proof of Theorem
\ref{GIT-integral-ratio-decomposes-multiplicatively}  (since the GIT
integration ratio of a torus is clearly $1$).

\begin{proposition}\label{prop-central-extension}
  If $G \surject \bar G$ is a central isogeny of connected reductive groups, 
  then $r_{ G} = r_{\bar G}$.  
\end{proposition}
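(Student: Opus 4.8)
The plan is to prove $r_{\tilde G} = r_G$ by exhibiting, for a single well-chosen test variety, that the two sides of the ratio agree. Since Theorem \ref{GIT-integral-ratio-is-invariant-of-G-theorem} guarantees that each ratio is independent of the choice of linearized variety, Chow class, and maximal torus, it suffices to compute $r_{\tilde G}$ and $r_G$ on \emph{compatible} geometric data and match the numerators and denominators separately. The natural setup is to take a $\tilde G$-linearized projective variety $X$ with $\semistable X {\tilde G} = \stable X {\tilde G} \neq \emptyset$ and to use Lemma \ref{composition-of-GIT-quotient-lemma}, which produces the iterated-quotient identity $X \dd \tilde G \cong (X \dd S) \dd G$ realizing the GIT quotient by $\tilde G$ as a two-step quotient: first by the central subtorus $S$, then by $G$.

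First I would fix split maximal tori $\tilde T \subseteq \tilde G$ and $T \subseteq G$ compatibly, so that $\tilde T$ is the preimage of $T$ under $\tilde G \surject G$ and $\tilde T \dd S = T$ (as $S \subseteq \tilde T$ is central). The key observation is that the Weyl groups coincide, $W(\tilde G) = W(G) =: W$, and the root systems are identified, so that the adjoint quotients satisfy $\mathfrak{\tilde g}/\mathfrak{\tilde t} \cong \mathfrak g/\mathfrak t$ as $T$-representations; hence the class $\ctop = \prod_{\alpha \in \Phi}\alpha$ is literally the same in both settings. This means the numerator $\int_{X \dd \tilde T} \ctop \frown \tilde\alpha$ for $\tilde G$ should be matched against $\int_{(X \dd S) \dd T} \ctop \frown \tilde\beta$ for $G$ acting on $X \dd S$. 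Next I would choose a Chow class $\alpha \in A_0(X \dd \tilde G)_\Q$ with $\int \alpha \neq 0$, use the canonical isomorphism $X \dd \tilde G \cong (X \dd S) \dd G$ to regard $\alpha$ equally as a class $\beta$ on $(X \dd S) \dd G$, and verify that a lift of $\alpha$ to $X \dd \tilde T$ pushes/pulls to a valid lift of $\beta$ to $(X \dd S) \dd T$ in the sense of Definition \ref{definition-of-lift}. The proper morphism $\phi$ of Lemma \ref{proper-map-from-categorical-and-stack-quotient-lemma} and its compatibility with coarse moduli morphisms are exactly what allow degrees to be compared across the two descriptions.

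The main work is to verify that the denominators agree, $\int_{X \dd \tilde G}\alpha = \int_{(X\dd S)\dd G}\beta$, and independently that the numerators agree, $\int_{X\dd \tilde T}\ctop \frown \tilde\alpha = \int_{(X\dd S)\dd T}\ctop \frown \tilde\beta$. For the denominators this is immediate from the canonical isomorphism $X \dd \tilde G \cong (X \dd S)\dd G$ together with invariance of degree under isomorphism. For the numerators I would run the same iterated-quotient analysis one level down at the torus: applying Lemma \ref{composition-of-GIT-quotient-lemma} to the central extension $1 \to S \to \tilde T \to T \to 1$ of tori yields $X \dd \tilde T \cong (X \dd S)\dd T$, and since $S$ acts with finite stabilizers on the relevant locus this isomorphism is compatible with the flat pullbacks to $B\tilde T$ and $BT$ used to define $\ctop$. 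Because $\ctop$ is pulled back from the classifying stack and the roots are unchanged under $\tilde G \surject G$, the two integrals of $\ctop \frown (\cdot)$ coincide under the isomorphism.

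I expect the main obstacle to be the bookkeeping around lifts: ensuring that a lift $\tilde\alpha$ of $\alpha$ in the $\tilde G$-world transforms, under the stacky morphism $\phi$ and the coarse-moduli identifications, into a genuine lift $\tilde\beta$ of $\beta$ in the $G$-acting-on-$X\dd S$ world, respecting the dimension shift by $g - t = \dim \tilde G - \dim \tilde T = \dim G - \dim T$. Here the hypothesis $\semistable X {\tilde G} = \stable X {\tilde G}$ is essential, as it guarantees the relevant quotients are proper Deligne–Mumford stacks so that the isomorphism \eqref{equation-vistoli-equality} and the pullback/pushforward compatibilities in Definition \ref{definition-of-lift} apply. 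Once the lift-compatibility is checked, the equality $r_{\tilde G} = r_{G}$ follows by dividing the matched numerators by the matched denominators, invoking well-definedness from Corollary \ref{rootctop-is-numerically-zero-for-singular-X} and invariance from Theorem \ref{GIT-integral-ratio-is-invariant-of-G-theorem}.
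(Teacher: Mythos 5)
Your strategy coincides with the paper's: compute $r_{\tilde G}$ on $X$ and $r_G$ on $X\dd S$, identify $X\dd \tilde G \cong (X\dd S)\dd G$ via Lemma \ref{composition-of-GIT-quotient-lemma}, transport classes along the proper morphisms $\phi$ of Lemma \ref{proper-map-from-categorical-and-stack-quotient-lemma}, and use the fact that the root systems of $\tilde G$ and $G$ agree so that $\ctop$ pulls back to $\ctop$ under $B\tilde T \to BT$. The matching of denominators is indeed immediate, and the projection formula together with the commutative square of Lemma \ref{proper-map-from-categorical-and-stack-quotient-lemma} gives $\phi_\ast(\ctop \frown \tilde\alpha) = \ctop \frown \phi_\ast\tilde\alpha$.

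The gap is exactly the step you defer as ``bookkeeping around lifts.'' That $\phi_\ast\tilde\alpha$ is a lift of $\phi_\ast\alpha$ is not a formality: by Definition \ref{definition-of-lift} it reduces to the identity $\phi_\ast(\pi^\ast\alpha) = \pi^\ast(\phi_\ast\alpha)$, where the two maps $\pi$ are the $G/T$-fibrations $[\semistable X{\tilde G}/\tilde T] \to \gitstack X{\tilde G}$ and $[\semistable{(X\dd S)}{G}/T] \to \gitstack{(X\dd S)}{G}$. Proper push-forward and flat pull-back commute only across a fibre square, so one must actually prove that the square formed by these four stacks is Cartesian. This is where the paper does its real work: it constructs an explicit inverse to the induced functor into the fibre product, sending a triple consisting of a $\tilde G$-torsor $\tilde E$, a $T$-torsor $E$, and an isomorphism of $G$-torsors $\tilde E/S \cong E \times_T G$ to the $\tilde T$-torsor $E \times_{\tilde E/S} \tilde E$. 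Without this (or an equivalent base-change argument for quotient stacks) the claimed equality of numerators is unestablished. Two smaller omissions: the paper first disposes of finite central extensions trivially (both ratios are computed on the same variety) so as to reduce to a connected kernel $S$, a subtorus of $\tilde T$; and it records that each $\phi_\ast$ is an isomorphism on rational Chow groups, which is what lets one realize every class on the $G$-side as $\phi_\ast$ of a class on the $\tilde G$-side before comparing degrees.
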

\begin{proof}
  Since the kernel of $G \surject G$ is a finite group scheme of order
  $d < \infty$, we may use the 
  same $\bar G$-variety $X$ on which to calculate both ratios $r_G$
  and $r_{\bar G}$.  These ratio will therefore agree, since the
  numerator and the denominator of the ratio $r_G$ will only differ
  from those of $r_{\bar G}$ by a factor of $d$.
\end{proof}

\section{Groups of type $\mathbf{A}_n$}\label{section-calculation}
\setcounter{equation}{0}

We compute the GIT integration ratio $r_G$
for $G = PGL(n)$, and use this to prove Corollary \ref{main-theorem}.

\begin{proposition}\label{proposition-calculation-for-pgln}
  The GIT integration ratio for the group $G = PGL(n)$ defined over any
  field is $$r_G = |W| = n!.$$
\end{proposition}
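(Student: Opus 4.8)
The plan is to exploit the invariance established in Theorem \ref{GIT-integral-ratio-is-invariant-of-G-theorem}: since $r_G$ is independent of $X$, $\alpha$, and $T$, I am free to compute it on the most convenient variety. For $G = PGL(n)$ I would take $X = (\P^{n-1})^{n+1}$, the space of ordered $(n+1)$-tuples of points in $\P^{n-1}$, with the diagonal $G$-action and a generic small perturbation of the symmetric linearization $\OO(1,\dots,1)$. The perturbation is there to guarantee $\semistable X T = \stable X T$, as the definition of the ratio requires. Because $\dim X = (n-1)(n+1) = n^2-1 = \dim G$ and $PGL(n)$ acts simply transitively on projective frames (the fundamental theorem of projective geometry), the locus $\semistable X G = \stable X G$ is a single free $G$-orbit; hence $\gitstack X G = \Spec k$ and $X \dd G$ is a reduced point. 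Taking $\alpha = [\mathrm{pt}]$, so that $\int_{X\dd G}\alpha = 1$, the fundamental class $\tilde\alpha = [X\dd T]$ is a lift: since $\pi$ is flat, $\pi^\ast\alpha = [\semistable X G/T]$, and this equals the restriction $i^\ast[X\dd T]$ of the fundamental class to the open substack. With these choices the ratio collapses to a single integral,
\begin{equation*}
r_{PGL(n)} = \int_{X\dd T}\ctop.
\end{equation*}

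The second step is to recognize this integral geometrically. Since $G$ acts freely on the frame orbit, the open substack $[\semistable X G/T]$ is literally the quasi-projective homogeneous space $G/T$, sitting as a dense open in the proper quotient $X\dd T$ with complement $[\unstable X G \cap \semistable X T / T]$. On $G/T$ the $T$-equivariant bundle $\mathfrak g/\mathfrak t$ is exactly the tangent bundle $T_{G/T} = G\times_T(\mathfrak g/\mathfrak t)$, so $\ctop$ restricts to its top Chern (Euler) class; meanwhile the vanishing results of \S\ref{section-rootctop-on-smooth-x}--\S\ref{strictly-semi-stable-points-section} (Corollary \ref{rootctop-is-numerically-zero-for-singular-X} and its $\ctop$-analogue, using $\ctop = (-1)^{|\Phi^+|}\rootctop\frown\rootctop$) show that $\ctop$ annihilates every class supported on the boundary. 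Consequently the $0$-cycle $\ctop \frown [X\dd T]$ receives no contribution from the complement, and its degree is computed entirely on the interior $G/T$. By Lemma \ref{lemma-independent-under-field-extension} I may first base change to $k = \bar k$, so that geometric tools become available.

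Finally I would evaluate $\int_{X\dd T}\ctop$ as the ``Euler number'' of $G/T$. Since $G/T \to G/B$ is an affine-space bundle, $G/T$ is cellularly equivalent to the flag variety $G/B$, which by the Bruhat decomposition is paved by $|W|$ affine cells; this forces the count to be $|W|$, and for type $\mathbf A_{n-1}$ one has $|W| = |S_n| = n!$. To turn this heuristic into a rigorous algebraic degree I would run equivariant localization in the spirit of Lemmas \ref{calculating-localization-of-point-lemma} and \ref{rootctop-is-zero-from-localization-in-G/P-lemma}: passing to the residual-torus action on the (simplicial, proper) toric quotient $X\dd T$, the fixed-point data are indexed by the $|W|$ torus-fixed points $\{\bar w\}$ of the flag variety, and the self-intersection formula $i^\ast i_\ast[\bar w] = (\prod_{\alpha\in\Phi^-} w\alpha)\,[\bar w]$ should make the fixed-point sum telescope to $|W| = n!$.

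The main obstacle is precisely this last step: honestly extracting the value $n!$ from $\int_{X\dd T}\ctop$. The conceptual slogan ``boundary contributes nothing, interior gives the Euler number'' must be replaced by a field-independent Chow computation, because $G/T$ is not proper and its compactification $X\dd T$ is in general a singular toric variety. Controlling the boundary (verifying $\semistable X T = \stable X T$ for the perturbed linearization and that $\ctop$ genuinely kills all excess there) and performing the fixed-point bookkeeping so that the contributions sum to \emph{exactly} $n!$ — rather than to a signed total that cancels, as the naive weights might suggest — is where the real care, and the specific combinatorics of type $\mathbf A$, will be required.
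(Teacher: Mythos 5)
Your first step is sound and matches the paper's strategy in spirit: choose $X$ so that $\semistable X G$ is a single free orbit, making $X\dd G$ a reduced point, take $\alpha=[\mathrm{pt}]$ with lift the fundamental class, and reduce everything to the single integral $r_{PGL(n)}=\int_{X\dd T}\ctop$. (The paper does exactly this, but with $X=\P(\mathbf M_n)$, the projectivized space of $n\times n$ matrices under left multiplication, rather than your configuration space $(\P^{n-1})^{n+1}$.) The problem is that you then stop at the only step that carries real content: the evaluation of $\int_{X\dd T}\ctop$. You acknowledge this yourself, but the gap is not a matter of ``care'' --- the route you sketch does not obviously close. The statement that $\ctop$ annihilates classes supported on $\unstable X G\cap\semistable X T$ tells you the answer is independent of the lift; it does \emph{not} let you ``compute the degree entirely on the interior $G/T$,'' because $G/T$ is not proper and there is no integration map on it. Your fallback, equivariant localization for the residual torus on the toric variety $X\dd T$, is also misdescribed: the torus-fixed points of that quotient are indexed by the maximal cones of its fan (images of the $T$-semistable coordinate points of $(\P^{n-1})^{n+1}$), not by the $|W|$ fixed points $\bar w$ of a flag variety, so the fixed-point sum you propose is not set up correctly. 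Lemmas \ref{calculating-localization-of-point-lemma} and \ref{rootctop-is-zero-from-localization-in-G/P-lemma} are vanishing statements on unstable strata and do not supply the bookkeeping you would need. There are also smaller unverified claims (that a generic perturbation of $\OO(1,\dots,1)$ achieves $\semistable X T=\stable X T$ while keeping $\semistable X G$ equal to the frame orbit, and that $T$ acts with trivial stabilizers so the fundamental class of the coarse space matches that of the stack), but these are minor next to the missing evaluation.

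The paper avoids all of this by choosing $X=\P(\mathbf M_n)$: there the $T$-unstable locus is the set of matrices with a zero row, there are no strictly semi-stable points, $X\dd G\cong\Spec k$, and crucially $X\dd T\cong(\P^{n-1})^n$ is \emph{smooth} with Chow ring $\Q[t_1,\dots,t_n]/(t_1^n,\dots,t_n^n)$. The integral then becomes the coefficient of $\prod_i t_i^{n-1}$ in $\prod_{i\neq j}(t_i-t_j)=(-1)^{n(n-1)/2}(\det M_V)^2$ for the Vandermonde matrix $M_V$, and a direct expansion over pairs of permutations $(\sigma,w_0\circ\sigma)$ yields exactly $n!$. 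If you want to salvage your choice of $X$, you would need an equally explicit presentation of $A^\ast(X\dd T)_\Q$ for the perturbed linearization, or a genuinely rigorous ``excess/Euler class'' argument on the proper quotient; as written, the proposal establishes the reduction but not the number.
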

\begin{proof}
  Let $GL(n)$ act on $\mathbf M_{n}$, the vector space of $n \times n$
  matrices with $k$-valued entries, via right multiplication of
  matrices. 
  This induces a dual right-action of $GL(n)$ on $\mathbf M_n
  ^\ast$ and hence right-actions of $GL(n)$, $SL(n)$, and $PGL(n)$ on $\P(\mathbf
  M_n)$, the projective space of lines in $\mathbf M_{n}^\ast$. 
  Choose the $PGL(n)$-linearization
  on $\OO_{\P(\mathbf M_{n})}(n)$ induced from  
  these representations on $\mathbf M_n$.
  Let $T \subseteq GL(n)$, $S
  \subseteq SL(n)$,  and $\bar T \subseteq PGL(n)$ denote the diagonal
  maximal tori. Because of the isogeny $SL(n) \surject PGL(n)$, the 
  $PGL(n)$-stability loci (resp.~$\bar T$-stability loci) are equal to
  the analogous $SL(n)$-stability loci (resp.~$S$-stability loci), 
  which we proceed to describe.

  A basis of $\mathbf M_n$ is given by the matrices $e_{ij}$,
  each defined by its unique nonzero entry of $1$ in the $(i,j)$th
  position.    
  Moreover, $e_{ij}$ is a weight vector of weight $\chi_j
  \in \chargp{T}$, with $\chi_j$ defined by the rule
  $$\begin{pmatrix}  t_1 &  
&
  & \\
     & t_2 &  & \\
     & 
 &
 \ddots &  \\
     & 
&
  & t_n\\
  \end{pmatrix}  \mapsto t_j.$$
  Notice that $\chi_1,\ldots, \chi_{n}$ is a basis of the
  character group $\chargp {T}$, and $\chargp{S}$ is spanned by the
  restriction of these characters $\chi_i|_S$, which are only subject to
  the relation  $\chi_n|_S = - \sum_{i=1}^{n-1} \chi_i|_S$.
  Therefore, the characters $\chi_1|_S,\ldots, \chi_n|_S$ form the vertices of a
  simplex centered at the origin in the vector space
  $\chargp{S}\otimes \Q \cong \Q^{n-1}$. 
  From the Hilbert-Mumford criterion
  (Thm.~\ref{hilbert-mumford-criterion}), one quickly concludes that 
  the unstable locus $\unstable {\P(\mathbf M_{n})}{\bar T}$
  is the set of all points $x \in \P(\mathbf M_n)$ such that the
  matrix $e_{ij}(x)$
  has a column with all entries $0$; all
  other points are $\bar T$-stable. 
  Such stable points have trivial $\bar T$-stabilizers, and so the 
  $\bar T$-quotient is easily seen to be
  $$ \P(\mathbf M_n)\dd \bar T \cong \gitstack {\P(\mathbf M_n)}{\bar
    T} \cong (\P^{n-1})^n.$$
  Furthermore, if we denote by
  $\mathbf M_n^\circ \cong (\A^{n}\setminus 0)^n$ the preimage in
  $\mathbf M_n \setminus 0$ of $\semistable{\P(\mathbf M_n)}{\bar T}$,
  there is also an identification
  $$ \P(\mathbf M_n) \dd \bar T \cong [\mathbf M_n^\circ / T],$$
  From this later description, we see immediately that the rational
  Chow ring is given as an $A^\ast(BT)_\Q \cong \Sym_\Q^\ast \chargp{T}$-algebra
  by 
  $$A^\ast(\P(\mathbf M_n)\dd \bar T )_\Q \cong
  \Q[\chi_1,\ldots, \chi_n]/(\chi_1^n,\ldots, \chi_n^n).$$
  In this ring, the class of a point is clearly $\prod_{i=1}^n
  \chi_i^{n-1}$, and therefore the degree of a Chow cycle is given by
  the coefficient of this monomial.
  
  The $PGL(n)$-stable locus
  then comprises the set of $x \in \P(\mathbf M_n)$ such that the
  matrix $e_{ij}(x)$ is of full-rank; this comprises the
  dense $PGL(n)$ orbit given by the inclusion $PGL(n) \subseteq
  \P(\mathbf M_n)$.
  The stabilizer of this orbit is trivial,
  and hence
  $$  \P(\mathbf M_{n})\dd PGL(n) \cong \gitstack {\P(\mathbf
    M_{n})}{PGL(n)} \cong \Spec k.$$
  The Chow ring of this variety is simply $\Q$ in degree 0, and by
  choosing $\alphanew = 1$, the fundamental class, the computation of
  the GIT integration ratio $r_G$ is reduced to evaluating
  $r_{G} = \int_{\P(\mathbf M_n)\dd T} \ctop(\mathfrak g / \mathfrak t)$.

  The class $\ctop(\mathfrak g / \mathfrak t) \in A^\ast(B \bar T)$ is the
  product of all the roots of $PGL(n)$; these are of the form
  $\alpha_{ij} := \chi_i - \chi_j \in \Sym^\ast \chargp{T} \cong
  A^\ast(B \bar T)$, for $1 \leq i \neq
  j\leq n$.  Therefore $r_G$ equals the coefficient of the monomial
  $\prod_{i=1}^n \chi_i^{n-1}$ in the expansion of $\prod_{i \neq j} (\chi_i -
  \chi_j)$.
  We can compute this coefficient as follows.
  The class $\ctop(\mathfrak g / \mathfrak t)$ may be alternatively
  expressed as 
  $$\prod_{i\neq j}(\chi_i - \chi_j) = (-1)^{n(n-1)/2} (\det M_V)^2,$$  
  where $\det M_V$ is the determinant of the Vandermonde matrix
$$
  M_V :=
  \begin{pmatrix}
    1 & \chi_1 & \chi_1^2 & \cdots & \chi_1^{n-1}\\
    1 & \chi_2 & \chi_2^2 & \cdots & \chi_2^{n-1}\\
    \vdots & \vdots & \vdots & \ddots & \vdots\\
    1 & \chi_n & \chi_n^2 & \cdots & \chi_n^{n-1}\\
  \end{pmatrix}.
  $$
  By definition, $\det M_V = \sum_{\sigma \in S_n} \mathrm{sgn}(\sigma)
  \prod_{i=1}^{n} \chi_i^{\sigma(i)-1}$.  In the
  Chow ring $A^\ast(\P(\mathbf M_n)\dd \bar T )_\Q$, the products of 
  monomials of the form $m_\sigma := \prod_{i=1}^n \chi_i^{\sigma(i) - 1}$
  for $\sigma \in S_n$ are defined by the rule: 
  $$m_\sigma \cdot m_{\sigma'} =
  \left \{ 
  \begin{array}{l l}
    \prod_{i=1}^n \chi_i^{n-1} & : \sigma(j) + \sigma'(j) = n+1; ~\forall~1 \leq j
    \leq n;\\
    0 & : \textrm{ otherwise.}
  \end{array}\right.$$
  If $w_0 := (1~n)(2~n-1)\cdots(\lceil n/2 \rceil ~ \lceil (n+1)/2
  \rceil) \in S_n$ denote the longest element of the Weyl group $W = S_n$,
  then for each 
  $\sigma \in S_n$, the permutation  $\sigma'$ defined as the composition
  $\sigma' := w_0 \circ \sigma$ is the 
  unique element of $S_n$ for which  $m_\sigma \cdot m_{\sigma'}\neq 0$. 
  For such pairs $(\sigma, \sigma')$, the product of the signs satisfies
  $\mathrm{sgn}(\sigma)\cdot \mathrm{sgn}(\sigma') = \mathrm{sgn}(w_0) = (-1)^{(n^2 - n)/2}$.
  Therefore, 
  \begin{align*}
    \ctop(\mathfrak g / \mathfrak t) & = (-1)^{(n^2 - n)/2} \cdot \sum_{\sigma \in S_n} (-1)^{(n^2 -
      n)/2} \prod_{i=1}^n \chi_i^{n-1}\\
    & = n! \cdot \prod_{i=1}^n \chi_i^{n-1}.
  \end{align*}
  Thus, $r_G = n! = |W|$. 
\end{proof}

Having proved the above proposition, the proof of Corollary
\ref{main-theorem} is now anticlimactic: 

\begin{proof}
  [Proof of Cor.~\ref{main-theorem}]
  Combine Theorems \ref{GIT-integral-ratio-is-invariant-of-G-theorem} and
  \ref{GIT-integral-ratio-decomposes-multiplicatively},
  and Proposition \ref{proposition-calculation-for-pgln}.
\end{proof}

\section{Groups of other types}
\label{section-final-remarks}
\setcounter{equation}{0}

We conclude 
with a discussion of how to generalize Corollary
\ref{main-theorem} to arbitrary connected reductive groups, that is, how
to prove Amplification \ref{amplification-general-result}.  The argument
will rely on Martin's original result \cite[Thm.~B$'$]{mar1}, and the discovery
of an independent proof is left as an open question.

\setcounter{repeatcounter}{6}
\begin{repeatamp}
  Let $G$ be a connected reductive group over a field $k$ and $T
  \subseteq G$ a  
  maximal torus.  
  For any
  $G$-linearized ample line bundle on a
  projective $G$-variety $X$ over $k$ satisfying $\stable X T = \semistable X
  T$ and any
  Chow class $\alphanew \in A_0(X \dd G)_\Q$
  with lift $\tilde \alphanew \in A_{\ast}(X\dd T)_\Q$,
  \begin{equation*}
    \int_{X\dd G}\alphanew = \frac{1}{|W|} \int_{X\dd T} \ctop(\mathfrak g / \mathfrak t) \frown
    \tilde \alphanew.
  \end{equation*}
\end{repeatamp}
\begin{proof}
We begin by pointing the reader to
 \cite{sesh1} as a reference on geometric invariant theory relative 
to a base.  The base we will use is the spectrum of the ring $\Z_{(p)}$,
the localization of $\Z$ at 
the prime $p$ equal to the characteristic of the base field $k$.
 
By Theorems \ref{GIT-integral-ratio-is-invariant-of-G-theorem} and 
\ref{GIT-integral-ratio-decomposes-multiplicatively}, it suffices to 
  verify $r_G^X = |W|$ on a single
  projective $G$-variety $X$ for each simple Chevalley group
  $G$.
  Each Chevalley group $G$ admits
  a model $G_{\Z}$ over the integers, with a split maximal torus $T_\Z
  \subseteq G_\Z$.
  We assert that there is a
  smooth projective $\Z_{(p)}$-scheme $X_{(p)}$ on which
  $G_{(p)} := G_\Z \times_\Z {\Z_{(p)}}$ acts as well as a
  $G_{(p)}$-linearized ample line bundle
  for which
  all $G_{(p)}$- (resp.~$T_{(p)}$-) semi-stable points are stable and
  comprise an open locus that nontrivially intersects the closed fibre
  over $\F_p$. 
  We justify this assertion 
  briefly:
  Proposition \ref{trivial-flag-variety-bundle-prop} reduces the problem
  to finding some $\Z_{(p)}$-scheme for which there exist
  $G_{(p)}$-stable points in the closed fibre over $\F_p$; 
  with the aid of the Hilbert-Mumford criterion, one discovers that
  many such schemes exist (e.g. take $\P(V_{\Z_{(p)}}^{\oplus n})$ with
  $V_{\Z_{(p)}}^{\oplus n}$ a large multiple of a general irreducible
  $G_{(p)}$-representation). 
  
  Having chosen such an $X_{(p)}$, the technique of specialization
  (cf.~\cite[\S20.3]{ful1}) implies that the 
  integral of relative
  $0$-cycles on $X_{(p)} \dd G_{(p)}$ 
  and $X_{(p)} \dd T_{(p)}$ restricted to the generic fibre over $\Q$ is
  equal to the integral restricted to any closed fibre over $\F_p$.
  The ratio $r_G$ is  
  independent under field extension by Lemma
  \ref{lemma-independent-under-field-extension}, and so this reduces
  the  calculation of $r_G$ over the field $k$ to the
  computation of $r_{G_{\C}}$, where $G_{\C} := G_{\Z} \times_{\Z}
  \C$.  
  The Kirwan-Kempf-Ness theorem 
  (cf.~\cite[\S8]{kir1} or \cite[\S8.2]{GIT}) shows that in the analytic
  topology, 
  the GIT quotient $X_\C \dd G_\C$ 
  is homeomorphic to the symplectic reduction of $X_{\C}$ by a maximal
  compact subgroup, and so Martin's theorem
  \cite[Thm.~B$'$]{mar1}
  implies $r_{G_\C} = |W|$.
\end{proof}
\begin{question}\label{question-purely-algebraic-proof}
  What is a purely algebraic proof that $r_G = |W|$ for a general
  connected reductive group $G$?
\end{question}
 
In light of Theorems
\ref{GIT-integral-ratio-is-invariant-of-G-theorem} and
\ref{GIT-integral-ratio-decomposes-multiplicatively}, to answer
Question \ref{question-purely-algebraic-proof} it suffices to verify
$r_G = |W|$ for all simple groups $G$.  Such a verification was done
in \S \ref{section-calculation} for simple groups of type $\mathbf
A_{n}$.  Can $r_G$ be calculated (algebraically) for any other simple
groups $G$?

\section*{Appendix: Chow groups and quotient
  stacks}\label{appendix-equivariant-Chow-group} 

\setcounter{equation}{0}
\setcounter{subsection}{0}
\renewcommand{\theequation}{A.\arabic{subsection}.\arabic{equation}}
\renewcommand{\thesubsection}{A.\arabic{subsection}}

Here we recall the basic properties of Chow groups for schemes and
quotient stacks.

\subsection{Chow groups}

For a scheme $X$ defined over a field $k$, let $A_i(X)$ denote the
$\Z$-module generated by $i$-dimensional subvarieties over $k$ modulo rational
equivalence (cf.~\cite{ful1}).  We call $A_\ast(X) := \oplus_i A_i(X)$ the
\emph{Chow group} of $X$.   To indicate rational
coefficients, we write $A_\ast(X)_\Q := A_\ast(X) \otimes \Q$.

For a scheme $X$ over a field $k$ and an algebraic group $G$
acting on $X$, the Chow group of the quotient stack $[X/G]$ is defined
by Edidin and Graham in \cite{edi-gra1} to be the limit of Chow
groups using Totaro's  finite approximation construction (cf.~\cite{tot1}):
$$A_i([X/G]) := A_{i-g+\ell}(X \times_G U),$$
where $U$ is an open subset of an $\ell$-dimensional
$G$-representation $V$ on which the $g$-dimensional group $G$ acts
freely and whose complement 
$V\setminus U$ has sufficiently large codimension.  It is a result
of Edidin and Graham that this group is well-defined, independent of the
presentation of the stack $[X/G]$ as a quotient, and recovers Gillet's
original definition \cite{gil1}
of Chow groups on Deligne-Mumford stacks upon tensoring with $\Q$
(see \cite[\S 5]{edi-gra1}).   
We may also think of $A_i[X/G]$ as the $G$-equivariant Chow group of $X$ to 
highlight the functoriality with respect to group homomorphisms,
and we make use of the notation $A_\ast^G(X) := A_\ast([X/G])$.

One benefit of the stacky interpretation
arises when $X$ admits a geometric invariant theory quotient $X \dd
G$.  In this case, there is a coarse moduli morphism
from the quotient stack $\gitstack X G$ to $X \dd G$ that induces an
isomorphism of Chow groups with rational coefficients:

\begin{theorem}[Edidin-Graham, Gillet, Vistoli]\label{theorem-gillet-vistoli}
If $X$ is a projective $G$-variety with a $G$-linearized ample line
bundle for which $\semistable X G = \stable X G$,
then the induced morphism $\phi: \gitstack X G \to X \dd G$ yields an
isomorphism of rational Chow groups,
$$\phi^\ast:  A_\ast(X \dd G)_\Q \to  A_\ast([X/G])_\Q.$$
\end{theorem}
\begin{proof}
  See \cite[Thm.~4]{edi-gra1} for a proof of the theorem, and the
  remark that follows it for a discussion of the prior results of
  Gillet \cite{gil1} and Vistoli \cite{vis1}.
\end{proof}

The Chow groups of quotient stacks are functorial with respect to the
usual operations (e.g. flat pull-back, proper
push-forward), and when $X$ is a smooth $n$-dimensional variety, there
is an intersection 
product that endows these groups with the structure of a commutative
ring with identity, graded by codimension and denoted by
$A^\ast([X/G]) := A_{n - g - \ast}([X/G])$.
Hence the Chow group of the stack $A_\ast([X/G])$ is
naturally a module over the ring $A^\ast(BG)$ where $BG = [\Spec
  k/G]$.   In the case $T = \G_m^r$ is a split
torus of rank $r$,   
$$A^\ast(BT) \cong \Sym \chargp T \cong \Z[\chi_1,\ldots, \chi_r],$$  
where $\chi_1,\ldots, \chi_r$ is some $\Z$-basis of $\chargp T$.
A character $\chi \in \chargp T$ is equivalent to a line bundle
$\sL_\chi$ over $BT$ whose Chern class $c_1(\sL_\chi) \in A^\ast(BT)$
corresponds to $\chi$ under the above isomorphism.

Not surprisingly, the structure of the Chow group $A_\ast([X/T])$ of the
stacky quotient of $X$ by a torus $T$ is especially 
well-understood (see \cite{bri1}).

\begin{proposition}[Brion]\label{brions-presentation-of-T-equivariant-Chow-proposition}  
  Let $X$ be a scheme with the action of a torus $T$  over an
  algebraically closed field $\bar k$.
  The $T$-equivariant Chow group 
  $A_\ast^T(X)$ is generated as an $A^\ast(BT)$-module by the classes $[Y]$
  associated to $T$-invariant closed subschemes $Y \inject X$.  
\end{proposition}
\begin{proof}
  See \cite[Thm.~2.1]{bri1}.
\end{proof}
\noindent Moreover, there is a
localization theorem useful for making calculations in $T$-equivariant
Chow groups.  The following version of the localization theorem will
suffice for our purposes:

\begin{theorem}[Localization]\label{brions-localization-theorem}
  Let $X$ be a smooth projective scheme with the action of a torus $T$  over an
  algebraically closed field $\bar k$,
  and let 
  $i: X^T \to X$ denote the inclusion of the scheme of $T$-fixed
  points.  Then the morphism
  $$i^\ast: A_\ast^T(X)_\Q \to A_\ast^T(X^T)_\Q$$
  is an injective $A^\ast(BT)$-algebra morphism.
  Furthermore, if $X^T$ consists of finitely many points, then the
  morphism
  $$i^\ast: A_\ast^T(X) \to A_\ast^T(X^T)$$
  of Chow groups with integer coefficients is injective as well.
\end{theorem}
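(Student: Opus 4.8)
The plan is to recognize this as an instance of the standard localization (concentration) theorem in $T$-equivariant intersection theory, for which I would follow Brion \cite{bri1}; let me sketch the argument and isolate its essential input. Write $U := X \setminus X^T$ for the open complement of the fixed locus and $e(N)$ for the equivariant top Chern (Euler) class of the normal bundle $N$ of the regular embedding $i$. Since $X^T$ is exactly the fixed locus and $X$ is smooth, $T$ acts on the fibres of $N$ with only \emph{nonzero} weights, so the leading term of $e(N)$ is a nonzero product of characters, hence a nonzerodivisor in $S_\Q = A^\ast(BT)_\Q$. I would first record the excision sequence
$$A_\ast^T(X^T) \xrightarrow{i_\ast} A_\ast^T(X) \xrightarrow{j^\ast} A_\ast^T(U) \to 0$$
together with the self-intersection formula $i^\ast i_\ast = e(N) \frown (-)$ on $A_\ast^T(X^T)$.

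The crux of the concentration step is the lemma that $A_\ast^T(U)_\Q$ is a \emph{torsion} $S_\Q$-module. By Brion's presentation (Proposition \ref{brions-presentation-of-T-equivariant-Chow-proposition}) it suffices to annihilate each generator $[Z]$ attached to a $T$-invariant subvariety $Z \subseteq U$ by a nonzero element of $S_\Q$. As $Z^T = \emptyset$, the identity component $T'$ of the generic stabilizer is a proper subtorus, so the quotient torus $\bar T := T/T'$ acts on a dense open of $Z$ with finite stabilizers; the associated quotient stack is Deligne--Mumford, so its rational Chow groups are bounded (cf. \eqref{equation-vistoli-equality}), whence every nonzero character inflated from $\bar T$ operates locally nilpotently on $[Z]$ modulo lower-dimensional classes, and a Noetherian induction on $\dim Z$ produces a nonzero annihilator of $[Z]$. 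Granting this, I would tensor the excision sequence with the fraction field $Q := \mathrm{Frac}(S_\Q)$: the term $A_\ast^T(U)_Q$ vanishes, so $i_\ast$ is surjective over $Q$, while the self-intersection formula, the invertibility of $e(N)$ in $Q$, and the freeness of $A_\ast^T(X^T)_\Q$ force $i_\ast$ to be injective over $Q$ as well. Thus $i_\ast$, and therefore $i^\ast$, are isomorphisms after passing to $Q$.

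It remains to upgrade ``isomorphism over $Q$'' to honest injectivity of $i^\ast$. Because $A_\ast^T(X^T)_\Q$ is free, hence torsion-free, over $S_\Q$, the map $i^\ast$ kills every $S_\Q$-torsion class; conversely the previous paragraph identifies $\ker i^\ast$ with the $S_\Q$-torsion submodule of $A_\ast^T(X)_\Q$. Hence $i^\ast$ is injective precisely when $A_\ast^T(X)_\Q$ is torsion-free, and this is where smoothness and projectivity are indispensable: the Bialynicki--Birula decomposition attached to a generic one-parameter subgroup gives a $T$-stable filtration of $X$ whose successive strata are equivariant affine bundles over the components of $X^T$, and the classes of their closures form a free $S_\Q$-basis of $A_\ast^T(X)_\Q$ (Brion \cite{bri1}); torsion-freeness follows, and that $i^\ast$ is moreover an $S$-algebra homomorphism is automatic from the functoriality of the Gysin pullback and the ring structure on the smooth source. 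For the integral statement, when $X^T$ is finite this same Bialynicki--Birula basis makes $A_\ast^T(X)$ integrally free and renders the matrix of $i^\ast$, in the bases given by the cell closures and by the points $\{[p]\}$, triangular with diagonal entries the Euler classes $e(N_p)$, each a nonzero product of weights and hence a nonzerodivisor in the domain $S$; a triangular integer matrix with nonzerodivisor diagonal is injective, giving injectivity over $\Z$. The main obstacle is exactly the torsion-freeness input of this last paragraph: everything preceding it is formal once the concentration lemma is in hand, but without the Bialynicki--Birula basis the kernel of $i^\ast$ could in principle harbor nonzero $S$-torsion.
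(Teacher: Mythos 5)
The paper offers no proof of this statement beyond the citation to Brion \cite[Cor.~3.2.1]{bri1}, so there is nothing internal to compare against; your sketch is a faithful reconstruction of that reference's argument --- concentration of $A_\ast^T$ on the fixed locus after inverting nonzero characters, identification of $\ker i^\ast$ with the $S_\Q$-torsion submodule, torsion-freeness via the Bia\l{}ynicki--Birula filtration, and the triangular Euler-class matrix for the integral case when $X^T$ is finite. This is exactly the intended justification (the only cosmetic slip is that the diagonal entries are the Euler classes of the normal bundles to the cells at the fixed points, not of $X^T$ in $X$, but both are products of nonzero weights), so your proposal is correct and consistent with the paper.
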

\begin{proof}
  See \cite[Cor. 3.2.1]{bri1}.  
\end{proof}

\subsection{Operational Chow groups}

 The $i$th operational Chow group $A^i(X)$ is defined to be the group of
``operations'' $c$ that comprise a system
of group homomorphisms  $c_f:A_\ast(Y) \to A_{\ast -i}(Y)$ associated to
morphisms of schemes $f:Y \to X$ and compatible 
with proper push-forward, flat pull-back, and the refined Gysin map
(cf.~\cite[\S17]{ful1}). Similarly, Edidin and Graham 
define equivariant operational Chow groups $A^i_G(X)$ via systems of
group homomorphisms $c_{f}^G: A_\ast^G(Y) \to A_{\ast-i}^G(Y)$
compatible with the $G$-equivariant 
analogues of the above maps (cf.~\cite[\S 2.6]{edi-gra1}). 

The clearest examples of equivariant operational Chow classes are 
equivariant Chern classes $c_i(\sE)$ of $G$-linearized vector
bundles $\sE$ (i.e. Chern classes of vector bundles on $[X/G]$).
Moreover, $A_G^\ast(X)$ equipped with composition
forms an associative, graded ring with identity.
 When $X$ is smooth, there is a Poincar\'e 
duality between the equivariant operational Chow group and the
usual equivariant Chow group.  For any operational Chow class $c =
\{c_f^G\} \in
A^i_G(X)$ and Chow class $\sigma \in A_\ast^G(Y)$, we introduce the
following  
``cap product'' notation:
$$c \frown \sigma := c_f^G(\sigma) \in A_{\ast-i}^G(Y).$$

\begin{theorem}[Poincar\'e duality]\label{poincare-duality-theorem}
  If $X$ is a smooth $n$-dimensional variety, then the map $A^i_G(X)
  \to A_{n-i}^G(X)$ defined by $c \mapsto c \frown [X]$ is an
  isomorphism. 
\end{theorem}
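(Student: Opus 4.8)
The plan is to reduce the equivariant assertion to the non-equivariant Poincar\'e duality of \cite[Cor.~17.4]{ful1} by passing to the finite-dimensional approximation of $BG$ used in the Edidin--Graham construction recalled above. Fix the operational degree $i$. Choose an $N$-dimensional $G$-representation $V$ and an open $U \subseteq V$ on which $G$ acts freely and whose complement $V \setminus U$ has codimension sufficiently large (larger than $n$ will do) for the identifications below to hold, and write $X_G := X \times_G U = (X \times U)/G$. Because $G$ acts freely on $U$, it acts freely on the smooth variety $X \times U$, so $X_G$ is itself a smooth variety, of dimension $n + N - g$ with $g := \dim G$.

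First I would record the two identifications supplied by the definitions of the equivariant groups (cf. \cite[\S2.6, Prop.~16]{edi-gra1}): in the chosen range one has
$$A^i_G(X) \cong A^i(X_G) \qquad \text{and} \qquad A^G_{n-i}(X) \cong A_{n-i-g+N}(X_G),$$
the second being the defining formula $A^G_j(X) = A_{j-g+N}(X_G)$ with $j = n-i$. A direct computation gives $\dim X_G - i = (n+N-g)-i = n-i-g+N$, so the duality map on the approximation lands in exactly the group identified with $A^G_{n-i}(X)$.

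Next I would invoke \cite[Cor.~17.4]{ful1} for the smooth variety $X_G$: capping with the fundamental class, $c \mapsto c \frown [X_G]$, is an isomorphism $A^i(X_G) \to A_{\dim X_G - i}(X_G)$. Under the identifications above this is a map $A^i_G(X) \to A^G_{n-i}(X)$, and the only remaining point is to check that it coincides with the equivariant duality map $c \mapsto c \frown [X]$ of the statement. This amounts to two compatibilities: that the equivariant fundamental class $[X] \in A^G_n(X)$ corresponds to the ordinary fundamental class $[X_G] \in A_{\dim X_G}(X_G)$ under the approximation (which holds because the flat pullback of $[X]$ along the bundle projection $X_G \to [X/G]$ is $[X_G]$), and that the equivariant operational cap product is intertwined with the ordinary one by this same flat pullback. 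Both are among the defining compatibilities of (equivariant) operational classes, so the two maps agree.

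I expect this intertwining of the two cap-with-fundamental-class maps to be the only genuine obstacle; the smoothness of $X_G$ and the dimension bookkeeping are routine, and once the maps are matched the isomorphism is immediate. Finally, the well-definedness of the Edidin--Graham groups (\cite[Prop.~16]{edi-gra1}) guarantees independence of the auxiliary data $(V,U)$, so the resulting isomorphism $A^i_G(X) \cong A^G_{n-i}(X)$ is canonical, as required.
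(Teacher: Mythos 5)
The paper offers no argument of its own here---it simply cites Edidin--Graham, Prop.~4---and your reduction to Fulton's non-equivariant Poincar\'e duality on the smooth finite-dimensional approximation $X_G = X\times_G U$ is precisely the argument of that reference, so the approach matches. The one caveat is that the identification $A^i_G(X)\cong A^i(X_G)$ is not ``supplied by the definitions'': the equivariant operational group is defined as a system of operations on $A_\ast^G(Y)$ for \emph{all} $G$-maps $Y\to X$ in \emph{all} degrees, so matching such a system with a single operational class on one approximation (compatibly across approximations, via the same double-fibration argument that shows well-definedness of $A_\ast^G$) is exactly the substantive content of Edidin--Graham's proof rather than a formality, though your closing appeal to that well-definedness does point at the right mechanism.
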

\begin{proof}
  See \cite[Prop. 4]{edi-gra1}.
\end{proof}

\begin{remark}
  When $X$ is a smooth $n$-dimensional variety, this allows us to
  write $A^k_G(X)$ to denote the 
  codimension $k$ Chow group $A_{n-k}^G(X)$, without any ambiguity in
  notation.
  Furthermore, this identification forms an
  isomorphism of rings $A_G^\ast(X) \cong
  A_{n-\ast}^G(X)$, with the multiplication structure on
  $A_{n-\ast}^G(X)$ given by the intersection product.
\end{remark}

\begin{bibdiv}
\begin{biblist}

\bib{blo1}{book}{
   author={Bloch, Spencer},
   title={Lectures on algebraic cycles},
   series={New Mathematical Monographs},
   volume={16},
   edition={2},
   publisher={Cambridge University Press},
   place={Cambridge},
   date={2010},
   pages={xxiv+130},
}

\bib{bor1}{book}{
   author={Borel, Armand},
   title={Linear algebraic groups},
   series={Graduate Texts in Mathematics},
   volume={126},
   edition={2},
   publisher={Springer-Verlag},
   place={New York},
   date={1991},
   pages={xii+288},
}

\bib{bri2}{article}{
   author={Brion, Michel},
   title={Cohomologie \'equivariante des points semi-stables},
   language={French},
   journal={J. Reine Angew. Math.},
   volume={421},
   date={1991},
   pages={125--140},
}
 
\bib{bri1}{article}{
   author={Brion, Michel},
   title={Equivariant Chow groups for torus actions},
   journal={Transform. Groups},
   volume={2},
   date={1997},
   number={3},
   pages={225--267},
}

\bib{bri-jos1}{article}{
   author={Brion, Michel},
   author={Joshua, Roy},
   title={Equivariant intersection cohomology of semi-stable points},
   journal={Amer. J. Math.},
   volume={118},
   date={1996},
   number={3},
   pages={595--610},
}

\bib{dem1}{article}{
   author={Demazure, Michel},
   title={Invariants sym\'etriques entiers des groupes de Weyl et torsion},
   language={French},
   journal={Invent. Math.},
   volume={21},
   date={1973},
   pages={287--301},
}

\bib{edi-gra1}{article}{
   author={Edidin, Dan},
   author={Graham, William},
   title={Equivariant intersection theory},
   journal={Invent. Math.},
   volume={131},
   date={1998},
   number={3},
   pages={595--634},
}

\bib{edi-gra2}{article}{
   author={Edidin, Dan},
   author={Graham, William},
   title={Localization in equivariant intersection theory and the Bott
   residue formula},
   journal={Amer. J. Math.},
   volume={120},
   date={1998},
   number={3},
   pages={619--636},
}

\bib{EGS}{article}{
  author={Edidin, Dan},
  author={Geraschenko, Anton},
  author={Satriano, Matthew},
  title={There is no degree map for 0-cycles on Artin stacks},
  journal={Transform.~Groups},
  volume={18}, 
  date={2013},
  number={2},
  pages={385--389}
}

\bib{ell-str1}{article}{
   author={Ellingsrud, Geir},
   author={Str{\o}mme, Stein Arild},
   title={On the Chow ring of a geometric quotient},
   journal={Ann. of Math. (2)},
   volume={130},
   date={1989},
   number={1},
   pages={159--187},
}

\bib{ful1}{book}{
   author={Fulton, William},
   title={Intersection theory},
   series={Ergebnisse der Mathematik und ihrer Grenzgebiete. 3. Folge. A
   Series of Modern Surveys in Mathematics [Results in Mathematics and
   Related Areas. 3rd Series. A Series of Modern Surveys in Mathematics]},
   volume={2},
   edition={2},
   publisher={Springer-Verlag},
   place={Berlin},
   date={1998},
   pages={xiv+470},
   isbn={3-540-62046-X},
}

\bib{GIT}{book}{
   author={Mumford, D.},
   author={Fogarty, J.},
   author={Kirwan, F.},
   title={Geometric invariant theory},
   series={Ergebnisse der Mathematik und ihrer Grenzgebiete (2) [Results in
   Mathematics and Related Areas (2)]},
   volume={34},
   edition={3},
   publisher={Springer-Verlag},
   place={Berlin},
   date={1994},
   pages={xiv+292},
}

\bib{GK}{article}{
   author={Guillemin, Victor},
   author={Kalkman, Jaap},
   title={The Jeffrey-Kirwan localization theorem and residue operations in
   equivariant cohomology},
   journal={J. Reine Angew. Math.},
   volume={470},
   date={1996},
   pages={123--142},
}

\bib{gil1}{article}{
   author={Gillet, Henri},
   title={Intersection theory on algebraic stacks and $Q$-varieties},
   booktitle={Proceedings of the Luminy conference on algebraic $K$-theory
   (Luminy, 1983)},
   journal={J. Pure Appl. Algebra},
   volume={34},
   date={1984},
   number={2-3},
   pages={193--240},
}

\bib{hes1} {article}{
   author={Hesselink, Wim H.},
   title={Uniform instability in reductive groups},
   journal={J. Reine Angew. Math.},
   volume={303/304},
   date={1978},
   pages={74--96},
}

\bib{hes2}{article}{
   author={Hesselink, Wim H.},
   title={Desingularizations of varieties of nullforms},
   journal={Invent. Math.},
   volume={55},
   date={1979},
   number={2},
   pages={141--163},
}

\bib{hum1} {book}{
   author={Humphreys, James E.},
   title={Linear algebraic groups},
   note={Graduate Texts in Mathematics, No. 21},
   publisher={Springer-Verlag},
   place={New York},
   date={1975},
   pages={xiv+247},
}

\bib{JK}{article}{
   author={Jeffrey, Lisa C.},
   author={Kirwan, Frances C.},
   title={Localization for nonabelian group actions},
   journal={Topology},
   volume={34},
   date={1995},
   number={2},
   pages={291--327},
}

\bib{keel-mori} {article}{
   author={Keel, Se{\'a}n},
   author={Mori, Shigefumi},
   title={Quotients by groupoids},
   journal={Ann. of Math. (2)},
   volume={145},
   date={1997},
   number={1},
   pages={193--213},
}

\bib{kem1}{article}{
   author={Kempf, George R.},
   title={Instability in invariant theory},
   journal={Ann. of Math. (2)},
   volume={108},
   date={1978},
   number={2},
   pages={299--316},
}

\bib{kem-nes1}{article}{
   author={Kempf, George},
   author={Ness, Linda},
   title={The length of vectors in representation spaces},
   conference={
      title={Algebraic geometry (Proc. Summer Meeting, Univ. Copenhagen,
      Copenhagen, 1978)},
   },
   book={
      series={Lecture Notes in Math.},
      volume={732},
      publisher={Springer},
      place={Berlin},
   },
   date={1979},
   pages={233--243},
}

\bib{kir1}{book}{
   author={Kirwan, Frances Clare},
   title={Cohomology of quotients in symplectic and algebraic geometry},
   series={Mathematical Notes},
   volume={31},
   publisher={Princeton University Press},
   place={Princeton, NJ},
   date={1984},
   pages={i+211},
}

\bib{kir2}{article}{
   author={Kirwan, Frances Clare},
   title={Partial desingularisations of quotients of nonsingular varieties
   and their Betti numbers},
   journal={Ann. of Math. (2)},
   volume={122},
   date={1985},
   number={1},
   pages={41--85},
}

\bib{mar1}{article}{
  author={Martin, Shaun},
  title={Symplectic quotients by a nonabelian group and by its 
    maximal torus},
  journal={arXiv:math/0001002v1 [math.SG]},
  date={2000},
  status={unpublished},
}


\bib{par}{article}{
   author={Paradan, Paul-Emile},
   title={The moment map and equivariant cohomology with generalized
   coefficients},
   journal={Topology},
   volume={39},
   date={2000},
   number={2},
   pages={401--444},
}

\bib{rei1}{article}{
   author={Reichstein, Zinovy},
   title={Stability and equivariant maps},
   journal={Invent. Math.},
   volume={96},
   date={1989},
   number={2},
   pages={349--383},
}

\bib{sesh1}{article}{
   author={Seshadri, C. S.},
   title={Geometric reductivity over arbitrary base},
   journal={Advances in Math.},
   volume={26},
   date={1977},
   number={3},
   pages={225--274},
}


\bib{steinberg}{article}{
   author={Steinberg, Robert},
   title={The isomorphism and isogeny theorems for reductive algebraic
   groups},
   journal={J. Algebra},
   volume={216},
   date={1999},
   number={1},
   pages={366--383},
}


\bib{tot1}{article}{
   author={Totaro, Burt},
   title={The Chow ring of a classifying space},
   conference={
      title={Algebraic $K$-theory},
      address={Seattle, WA},
      date={1997},
   },
   book={
      series={Proc. Sympos. Pure Math.},
      volume={67},
      publisher={Amer. Math. Soc.},
      place={Providence, RI},
   },
   date={1999},
   pages={249--281},
}

\bib{ver} {article}{
   author={Vergne, Mich{\`e}le},
   title={A note on the Jeffrey-Kirwan-Witten localisation formula},
   journal={Topology},
   volume={35},
   date={1996},
   number={1},
   pages={243--266},
}

  
\bib{vis1}{article}{
   author={Vistoli, Angelo},
   title={Intersection theory on algebraic stacks and on their moduli
   spaces},
   journal={Invent. Math.},
   volume={97},
   date={1989},
   number={3},
   pages={613--670},
}

\bib{wit}{article}{
   author={Witten, Edward},
   title={Two-dimensional gauge theories revisited},
   journal={J. Geom. Phys.},
   volume={9},
   date={1992},
   number={4},
   pages={303--368},
}

\end{biblist}
\end{bibdiv}

\end{document}